\newtheorem{thm}{Theorem}[section]
\newtheorem{lem}[thm]{Lemma}
\newtheorem{prop}[thm]{Proposition}
\newtheorem*{prob}{Problem}
\theoremstyle{definition}
\newtheorem{rem}[thm]{Remark}
\numberwithin{equation}{section}
\begin{document}

\title[On the geometric flows solving K\"ahlerian inverse $\sigma_k$ equations]{On the geometric flows solving K\"ahlerian inverse $\sigma_k$ equations}%

\author{Hao Fang, Mijia Lai}%
\address{Hao Fang, 14 MLH, Iowa City, IA, 52242}
\email{haofang@math.uiowa.edu}
\address{Mijia Lai, 915 Hylan Building,
University of Rocheste, RC Box 270138, Rochester, NY 14627}%
\email{lai@math.rochester.edu}%

\thanks{The first-named author is supported in part by National Science Foundation grant DMS-1008249.}

\begin{abstract}
In this note, we extend our previous work on the inverse $\sigma_k$ problem. Inverse $\sigma_{k}$ problem is a fully nonlinear geometric PDE on compact K\"ahler manifolds. Given a proper geometric condition, we prove that a large family of nonlinear geometric flows converges to the desired solution of the given PDE.
\end{abstract}
\maketitle
% ----------------------------------------------------------------
\section{Introduction}

In this note, we study general flows for the inverse $\sigma_k$-curvature problem in K\"ahler geometry. This is a continuation of our previous work ~\cite{FLM}.

Geometric curvature flow has been a central topic in the recent development of geometric analysis. The $\sigma_k$-curvature problems and inverse $\sigma_k$-curvature problems, fully nonlinear in nature, have appeared in several geometric settings. B.Andrews ~\cite{A1,A2} studies the curvature flow of embedded convex hypersurfaces in the Euclidean space.  Several authors study the $\sigma_k$-equation in conformal geometry, see e.g., ~\cite{V,CGY,GW,B} and references therein. It is thus interesting to explore the corresponding problem in K\"ahler geometry.

In K\"ahler geometry, special cases of the $\sigma_k$-problem have appeared in earlier literature. Among them one important example is Yau's seminal work on the complex Monge-Amp\`{e}re equations in Calabi conjecture. The general case has been studied recently in~\cite{H,HMW}. There exist, however, some analytical difficulties to completely solve this problem for $k<n$.

Another important example is the Donaldson's $J$-flow~\cite{D}, which gives rise to an inverse $\sigma_1$ type equation. $J$-flow is fully studied in~\cite{Ch1,Ch2,SW}. The general case is described and treated in ~\cite{FLM}, via a specific geometric flow. In contrast to the $\sigma_{k}$-problem, we can pose nice geometric conditions to overcome the analytical difficulties for the inverse $\sigma_{k}$-problem. In this note, we construct more general geometric flows to solve this problem.

We now describe the problem in more details.

Let $(M, \omega)$ be a compact K\"{a}hler manifold without boundary. Let $\chi$ be another K\"{a}hler metric in the class $[\chi]$ different that $[\omega]$. We define
\[
\sigma_k(\chi)={n \choose k}\frac{\chi^k\wedge \omega^{n-k}}{\omega^n},
\]
for a fixed integer $1\leq k\leq n$.
It is easy to see that $\sigma_k(\chi)$ is a global defined function on $M$; and point-wise it is the $k$-th elementary symmetric polynomial on the eigenvalues of $\chi$ with respect to $\omega$. Define
\[
c_k:=\frac{\int_M \sigma_{n-k}(\chi)}{\int_M \sigma_n(\chi)}={n \choose k}\frac{[\chi]^{n-k}\cdot [\omega]^k}{[\chi]^n},
\]
which is a topological constant depending only on cohomology classes $[\chi]$ and $[\omega]$.

In~\cite{FLM}, we studied the following problem
\begin{prob}
Let $(M,\omega)$, $\chi$ and $c_k$ be given as above, is there a metric $\tilde{\chi}\in[\chi]$ satisfying
\begin{align} \label{equation}
c_k\tilde{\chi}^n={n \choose k} \tilde{\chi}^{n-k}\wedge \omega^k ?
\end{align}
\end{prob}

To tackle this problem, we consider the following geometric flow:
\begin{align} \label{flow-old}
\left\{
\begin{array}{l l}
\frac{\partial}{\partial t} \varphi &=c_k^{1/k}-(\frac{\sigma_{n-k}(\chi_{\varphi})}{\sigma_n(\chi_{\varphi})})^{1/k}
\\
\varphi(0)& =0
\end{array}
\right.
\end{align}
in the space of K\"{a}hler potentials of $\chi$:
\[
\mathcal{P}_{\chi}:=\{ \varphi \in C^{\infty}(M) | \chi_{\varphi}:=\chi+ \frac{\sqrt{-1}}{2}\partial\bar{\partial} \varphi>0 \}.
\]

It is easy to see that the stationary point of the flow corresponds to the solution of (\ref{equation}).

When $k=1$, (\ref{flow-old}) is the Donaldson's $J$-flow~\cite{D}, defined in the setting of moment map (cf. ~\cite{Ch1}). In this case, Song and Weinkove~\cite{SW} provide a necessary and sufficient condition for the flow to converge to the critical metric. For general $k$, this problem is solved in~\cite{FLM} with an analogous condition which we  now describe.

We define $\mathcal{C}_k(\omega)$ to be
\begin{align} \label{cone}
\mathcal{C}_k(\omega)=&\{ [\chi]>0,\, \exists \chi' \in [\chi], \\ \notag
&\text{such that}\, \  nc_k \chi'^{n-1}-{n \choose k}(n-k)\chi'^{n-k-1}\wedge \omega^k>0\footnotemark\}.
\end{align}
\footnotetext[1]{a positive $(n-1,n-1)$ form.}

Note for $k=n$, (\ref{cone}) holds for any K\"{a}hler class. Hence $\mathcal{C}_n(\omega)$ is the entire K\"{a}hler cone of $M$.

The necessity of the cone condition (\ref{cone}) is easy to see once we write (\ref{equation}) locally as
\[
\frac{\sigma_{n-k}(\chi)}{\sigma_n(\chi)}=\sigma_k(\chi^{-1})=c_k.
\]
Here $\chi^{-1}$ denotes the inverse matrix of $\chi$ under local coordinates. Since $\chi^{-1}>0$, we necessarily have
\[
\sigma_k(\chi^{-1}|i)<c_k, \forall \  i.
\]
This condition is equivalent to the cone condition (\ref{cone}). See Proposition 2.4 of~\cite{FLM}.

The main result of~\cite{FLM}  is the following:
\begin{thm}\label{old}
Let $(M, \omega)$ be a compact K\"{a}hler manifolds. Let $k$ be a fixed integer $1\leq k \leq n$. Assume $\chi$ is another K\"{a}hler form with its class $[\chi] \in \mathcal{C}_k(\omega)$, then the flow
\begin{align}
\frac{\partial}{\partial t} \varphi &=c_k^{1/k}-(\frac{\sigma_{n-k}(\chi_{\varphi})}{\sigma_n(\chi_{\varphi})})^{1/k},
\end{align}
with any initial value $\chi_{_0} \in [\chi]$ has long time existence and converges to a unique smooth metric $\tilde{\chi}\in [\chi]$ satisfying
\begin{align}
c_k\tilde{\chi}^n={n \choose k} \tilde{\chi}^{n-k}\wedge \omega^k.
\end{align}
\end{thm}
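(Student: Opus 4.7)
The plan is to follow the standard strategy for fully nonlinear parabolic flows on compact K\"ahler manifolds: establish short-time existence, prove uniform a priori $C^k$ estimates for all $k$, deduce long-time existence by continuation, and finally extract convergence from a monotone functional along the flow. First I would rewrite the flow using the identity $\sigma_{n-k}(\chi_\varphi)/\sigma_n(\chi_\varphi) = \sigma_k(\chi_\varphi^{-1})$, so that the equation becomes $\partial_t \varphi = c_k^{1/k} - \sigma_k(\chi_\varphi^{-1})^{1/k}$. The operator on the right is smooth and concave on the positive Hermitian cone, and its linearization is uniformly elliptic as long as $\chi_\varphi$ is positive and bounded; standard theory for concave fully nonlinear parabolic equations on compact manifolds then yields short-time existence. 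A suitable normalization (e.g.\ fixing $\int_M \varphi\,\omega^n$) makes the flow uniquely determined.

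The a priori estimates are the heart of the argument. For the $C^0$ bound on $\varphi$ I would introduce a Chen--Mabuchi-type functional whose Euler--Lagrange equation is (\ref{equation}) and which is monotone decreasing along the flow, then combine monotonicity with Moser iteration (or an ABP-type argument in the spirit of Weinkove's treatment of the $J$-flow) to get $\|\varphi\|_{C^0} \leq C$. For the $C^2$ estimate, which is where the cone condition $[\chi]\in\mathcal{C}_k(\omega)$ enters essentially, I would apply the maximum principle to a quantity of the form $\log\mathrm{tr}_\omega\chi_\varphi - A\varphi$ and exploit the auxiliary K\"ahler representative $\chi'$ furnished by the cone condition. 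The pointwise positivity of $nc_k\chi'^{n-1} - \binom{n}{k}(n-k)\chi'^{n-k-1}\wedge \omega^k$ should be exactly what is needed to absorb the bad third-order terms produced by differentiating $\sigma_k(\chi_\varphi^{-1})$, thereby forcing $\mathrm{tr}_\omega \chi_\varphi \leq C$. Concavity of $-\sigma_k(\cdot^{-1})^{1/k}$ then permits the Evans--Krylov theorem to deliver $C^{2,\alpha}$ bounds, and Schauder bootstrap gives uniform bounds in every $C^k$.

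With uniform estimates in hand, long-time existence is immediate from the standard continuation principle. For convergence, the monotone functional together with higher-order estimates gives $\int_0^\infty \|\partial_t\varphi\|_{L^2}^2\,dt < \infty$, hence smooth subsequential convergence to a critical metric $\tilde{\chi}$ solving (\ref{equation}). Upgrading to full convergence and uniqueness of the limit would follow either from uniqueness of critical points of the energy in $[\chi]$ or, more robustly, from an exponential decay argument using the linearized flow near the limit.

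The main obstacle is clearly the $C^2$ estimate: the challenge is to convert the cohomological positivity encoded by the cone condition into a sharp enough pointwise differential inequality at a maximum point of $\log \mathrm{tr}_\omega \chi_\varphi$. This requires the right choice of representative $\chi'$ from $\mathcal{C}_k(\omega)$, careful use of Maclaurin-type inequalities for $\sigma_k$, and meticulous bookkeeping of the curvature and cross terms generated by the K\"ahler structure. Once this step is secured, the remaining ingredients are fairly standard machinery adapted to the concave fully nonlinear setting.
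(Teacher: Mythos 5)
Your overall architecture is the one the paper follows (the theorem itself is quoted from [FLM]; the present paper proves the generalization in Section 3 by the same scheme): a partial second-order estimate via the maximum principle, the Song--Weinkove functional machinery for the $C^0$ bound and convergence, then Evans--Krylov and Schauder. But you have misidentified what the cone condition does in the $C^2$ estimate, and this is the crux of the proof. The third-order terms generated at the maximum point (the quantity $B$ in the paper) are disposed of purely by the concavity --- in fact the strong concavity $F^{i\bar i,j\bar j}+F^{i\bar i}\delta_{ij}/\chi_j\le 0$ --- of the operator $-\sigma_k(\chi^{-1})^{1/k}$; the cone condition plays no role there. (Relatedly, the test quantity is $\log$ of the largest eigenvalue of $\chi_\varphi$ minus $A\varphi$, not $\log\mathrm{tr}_\omega\chi_\varphi - A\varphi$; the eigenvalue version is what makes the third-order bookkeeping go through.) Where the cone condition actually enters is in controlling the zeroth- and second-order remainder $A\dot\varphi - A\sum_i F^{i\bar i}\varphi_{i\bar i}$: via the pinching lemma (Lemma 3.2 here, Theorem 2.8 of [FLM]) and G{\aa}rding's inequality one shows that when $\chi_1/\chi_n$ is large, $\sum_i F^{i\bar i}\chi_{0i\bar i}$ exceeds what is needed by a definite factor $(1+\theta)$, producing a good term $A\epsilon\sum_i F^{i\bar i}\chi_{0i\bar i}\ge A\lambda\sum_i F^{i\bar i}$ that dominates the curvature terms $C_2\sum_i F^{i\bar i}$ from commuting derivatives; the complementary case $\chi_1/\chi_n\le N$ follows directly from the a priori bounds on $\sigma_k(\chi^{-1})$. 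If you aim the cone condition at the third-order terms you will not close the estimate, and you will have no mechanism left for the curvature terms.

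Two further structural points. The paper proves long-time existence for an arbitrary triple $(M,\omega,\chi)$, with no cone condition, by a time-dependent Chen-style second-order estimate; the cone condition is needed only for the uniform estimates that give convergence. Your plan of routing long-time existence through uniform bounds works under the cone hypothesis but obscures this. Also, the order of the estimates is reversed relative to what is actually available: the second-order bound one can prove has the form $|\partial\bar\partial\varphi|\le Ce^{A(\varphi-\inf\varphi)}$, i.e. it depends on the oscillation of $\varphi$, and the uniform oscillation and $C^0$ bounds are then supplied afterwards by the functionals $\mathcal{F}_{n-k}$ and $\mathcal{F}_{n-k,n}$ together with a normalization, after which the two combine. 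Uniqueness of the limit comes from uniqueness (up to constants) of solutions of the critical equation, as you suggest; no exponential decay argument is needed.
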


In this note, we generalize Theorem~\ref{old}. We shall study an abstract flow on $M$ of the form:

\begin{align} \label{flow}
\left\{
\begin{array}{l l}
\frac{\partial}{\partial t} \varphi &=F(\chi_{\varphi})-C,
\\
\varphi(0)& =0,
\end{array}
\right.
\end{align}
where $$F(\chi_{\varphi})=f[\frac{\sigma_{n-k}(\chi_{\varphi})}{\sigma_n(\chi_{\varphi})}],$$ $$f\in C^\infty( \mathbb{R}_{> 0},\  \mathbb{R}),$$
$$C=f(c_k).$$

Note that (\ref{flow-old}) is a special case of (\ref{flow}) for $f(x)=-x^{1/k}$.

Abusing notation, we also regard $F$ as a symmetric function on
\[
\Gamma_n:=\{ \chi \in \mathbb{R}^n| \chi_1>0, \chi_2>0, \cdots \chi_n>0\}
\]
by treating $F(\chi_{\varphi})=F(\chi_1,\cdots,\chi_n)$,  where $(\chi_i)$ are eigenvalues of $\chi_{\varphi}$ with respect to $\omega$.
Then by carefully examining the proof of Theorem~\ref{old} in~\cite{FLM}, we need the following structure conditions on $F$:

\begin{itemize}
  \item Ellipticity: ${F_i}>0$,
  \item Concavity: ${ F_{i j}}\leq 0$,
  \item Strong concavity: ${ F_{ij}}+{F_i\over\chi_j}\delta_{ij}\leq 0$.
\end{itemize}
Here $F_i= {\partial F \over \partial\chi_i}$ and $F_{ij}={\partial^2 F \over{\partial\chi_i \partial\chi_j}}$. Note that concavity of $F$ follows from strong concavity and ellipticity of $F$.

It is easy to check that $F(\chi_1,\cdots, \chi_n):=-(\frac{\sigma_{n-k}(\chi)}{\sigma_n(\chi)})^{1/k}$ satisfies above conditions.

In this note, we prove the following:

\begin{thm} [Main theorem]
\label{mainthm}
Let $(M, \omega)$ be a compact K\"{a}hler manifold. $k$ is a fixed integer $1\leq k\leq n$. Let $\chi$ be another K\"{a}hler metric such that $[\chi] \in \mathcal{C}_k$. Assume that $f\in C^\infty( \mathbb{R}_{>0},\  \mathbb{R})$ satisfies the following conditions:
\begin{align}\label{condition1}
 f'<0, \quad
f''\geq0, \quad
 f''+\frac{f'}{x}\leq0,
\end{align}
then the flow (\ref{flow}) with any initial value $\chi_{_0}\in [\chi]$ has long time existence and the metric $\chi_{\varphi}$ converges in $C^{\infty}$ norm to the critical metric $\tilde{\chi}\in [\chi]$ which is the unique solution of (\ref{equation}).
\end{thm}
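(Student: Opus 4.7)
The plan is to follow the proof of Theorem~\ref{old} in \cite{FLM} and verify that each step relies only on the three structural conditions on $F$ (ellipticity, concavity, strong concavity) rather than on the explicit form $F(x)=-x^{1/k}$. The first task is to check that conditions (\ref{condition1}) translate into these structural conditions for $F=f\circ G$, where $G(\chi)=\sigma_{n-k}(\chi)/\sigma_n(\chi)=\sigma_k(\chi^{-1})$. A direct chain-rule computation gives $F_i=f'(G)G_i$ and $F_{ij}=f''(G)G_iG_j+f'(G)G_{ij}$; with $G_i=-\sigma_{k-1}(\chi^{-1}|i)/\chi_i^2<0$ on $\Gamma_n$, the sign $f'<0$ yields ellipticity. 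Concavity and strong concavity follow by combining $f''\geq 0$ and $f''+f'/x\leq 0$ with the explicit identities for $G_{ij}$ on $\Gamma_n$; in particular the combination $G_{ii}+G_i/\chi_i$ reduces to a controlled positive multiple of $\sigma_{k-1}(\chi^{-1}|i)/\chi_i^3$, which is exactly where the normalization $f''+f'/x\leq 0$ is used.

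Short-time existence is standard parabolic theory once ellipticity is established. For the $C^0$ bound on $\varphi$, I would compare with the critical potential $\tilde\varphi$ from Theorem~\ref{old}: at a spatial maximum of $\varphi-\tilde\varphi$ the eigenvalues of $\chi_\varphi$ are dominated by those of $\tilde\chi$, so ellipticity gives $F(\chi_\varphi)\leq F(\tilde\chi)=C$ there, which forces $\frac{d}{dt}\max(\varphi-\tilde\varphi)\leq 0$; a symmetric argument bounds the minimum. The central a priori estimate is the $C^2$ bound. I would apply the maximum principle to a quantity of the form $\log\lambda_{\max}(\chi_\varphi)+A\varphi$ (or $\log\mathrm{tr}_\omega\chi_\varphi-A\varphi$ with a large constant $A$), differentiate along the flow, and commute covariant derivatives. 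The resulting expression contains third-order ``bad'' terms and bisectional curvature terms from $\omega$; strong concavity $F_{ij}+F_i\delta_{ij}/\chi_j\leq 0$ is precisely the algebraic condition that allows these to be absorbed into the good $F_{ij}$ second-order terms. This is the computation of \cite{FLM}, which I would reinspect to confirm that only the three structural conditions enter.

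Once the eigenvalues of $\chi_\varphi$ are bounded from above, positivity $\chi_\varphi>0$ together with the flow equation gives a uniform positive lower bound as well; the operator is then uniformly parabolic, and concavity of $F$ yields Evans--Krylov $C^{2,\alpha}$ estimates, after which standard bootstrap gives uniform $C^\infty$ bounds and long-time existence. For convergence, one differentiates a suitable energy along the flow (analogous to the functional used in \cite{FLM}, exploiting $f'<0$) to obtain decay and in particular $\|\partial_t\varphi\|_{L^2}\to 0$. Combined with the $C^\infty$ bounds, any subsequential limit solves $F(\chi_\infty)=C$, hence equals $\tilde\chi$ by the uniqueness part of Theorem~\ref{old}, and a standard refinement upgrades this to smooth convergence of the full flow. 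The main obstacle I anticipate is the $C^2$ estimate: one must verify that strong concavity is exactly the algebraic condition needed to close the maximum principle argument, and that no hidden use of the specific power form $F=-G^{1/k}$ has crept into the FLM computation.
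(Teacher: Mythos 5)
Your overall architecture matches the paper's: verify that the hypotheses on $f$ yield ellipticity, concavity and strong concavity of $F$ (this is the paper's Section 2, reduced via the identities for $\log\sigma_k$ and its inverse-variable counterpart), run a maximum-principle second-order estimate on $\log\chi_{1\bar 1}-A\varphi$ in which strong concavity kills the third-order terms, then Evans--Krylov, bootstrap, and an energy-functional argument for convergence with uniqueness closing the loop. One step where you genuinely diverge is the $C^0$ estimate: you compare $\varphi$ with the critical potential $\tilde\varphi$ already supplied by Theorem~\ref{old} and use monotonicity of $F$ at a spatial extremum of $\varphi-\tilde\varphi$. That barrier argument is legitimate (there is no circularity, since the existence of $\tilde\chi$ is quoted from the earlier work) and is simpler than what the paper does; the paper instead introduces the Song--Weinkove functionals $\mathcal{F}_{n-k}$ and $\mathcal{F}_{n-k,n}$, proves they are monotone along the flow (using $f'<0$ for one and Jensen's inequality with the concavity of $x\mapsto f(1/x)$ for the other), and extracts both the $C^0$ bound and the decay needed for convergence from them. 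Note that your route still needs such a functional for the convergence step, as you acknowledge.

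The genuine gap is in your account of the $C^2$ estimate. You attribute the closing of the maximum-principle argument entirely to strong concavity, but strong concavity only disposes of the third-order term $B$. The curvature terms produce $C_2\sum_i F^{i\bar i}$ on the bad side, and what dominates them is the term $A\sum_i F^{i\bar i}\chi_{0i\bar i}$ coming from the $-A\varphi$ in the test function; making that term large enough is exactly where the cone condition $[\chi]\in\mathcal{C}_k(\omega)$ enters, through the quantitative lemma (Lemma~\ref{lemma1}, i.e.\ Theorem 2.8 of~\cite{FLM}) giving $\sigma_k^{1/k}(\chi_{0i\bar i}/\chi_i^2)\geq(1+\theta)c_k^{-1/k}\sigma_k^{2/k}(\chi^{-1})$ when the eigenvalue ratio is large, plus a separate elementary argument when it is not. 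Your proposal never says where the cone hypothesis is used. Moreover, splitting off a fraction $\epsilon$ of $\sum_i F^{i\bar i}\chi_{0i\bar i}$ requires the scalar inequality
\begin{align}\notag
f(x)+kf'(x)\,x-kf'(x)\,x^{1+1/k}\geq 0 \qquad (c_k=1,\ f(1)=0),
\end{align}
which must be checked from $f''+f'/x\leq 0$; this is a new, $f$-dependent verification with no counterpart in the fixed-power case $f(x)=-x^{1/k}$ beyond an identity, so the plan of ``reinspecting FLM to confirm only the structural conditions enter'' would not surface it. Until the cone condition's role and this inequality are supplied, the central estimate of the proof is not actually closed.
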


\begin{rem}
The novelty of our theorem is that there exists a large family of nonlinear geometric flows which yields the convergence towards the solution of inverse $\sigma_k$ problem (\ref{equation}). For example, the function $f$ can be chosen as $f(x)=-\ln x$ or $f(x)=-x^p$, for $0<p\leq1$. Note that for the special case $f(x)=-\ln x$ and $k=n$, we get an analogue of the K\"ahler-Ricci flow. For $f(x)=-x$ and $k=n$, a similar flow was studied in a recent paper~\cite{CK}.
\end{rem}

\begin{rem}
Theorem~\ref{mainthm} is inspired and can be viewed as a K\"ahler analogue of Andrew's result ~\cite{A2} on pinching estimates of evolutions of convex hypersurfaces. In fact, our structure conditions are very similar to those of his.
\end{rem}
The rest of the paper is organized as follows: In Section 2, we discuss the conditions on $f$ and strong concavity of $F$; In Section 3, we give the proof of the main result.

\section{Strong concavity}
In this section, we will explore concavity properties for functions involving the quotient of elementary symmetric polynomials.

\begin{prop}
Let $\chi\in \Gamma_n$, $f: \mathbb{R}_{>0}\to \mathbb{R}$, define $\rho(\chi_1, \cdots, \chi_n)=f(\frac{\sigma_{n-k}(\chi)}{\sigma_n(\chi)})$, suppose $f$ satisfies the following conditions:
\begin{align}  \label{condition}
 f'<0, \quad
f''\geq0, \quad
 f''+\frac{f'}{x}\leq0.
\end{align}
Then $\rho$ satisfies
\begin{itemize}
  \item Ellipticity: $\rho_i>0, \forall i$,
  \item Concavity: $\rho_{ij}\leq 0$,
  \item Strong Concavity: $\rho_{ij}+\frac{\rho_i}{\chi_j}\delta_{ij}\leq 0$.
\end{itemize}
\end{prop}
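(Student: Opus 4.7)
The plan is to reduce all three conclusions to a single matrix inequality for $g(\chi):=\sigma_{n-k}(\chi)/\sigma_n(\chi)$, and then verify that inequality by passing to logarithmic coordinates where a standard LogSumExp convexity becomes transparent.

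After diagonalizing, I would write $y_i=1/\chi_i$ so that $g(\chi)=\sigma_k(y_1,\ldots,y_n)$. A direct computation gives $g_i=-\sigma_{k-1}(y|i)/\chi_i^{2}<0$, so from $f'<0$ we obtain ellipticity $\rho_i=f'(g)\,g_i>0$ immediately. For strong concavity, write
\[
\rho_{ij}+\frac{\rho_i}{\chi_i}\delta_{ij}=f''(g)\,g_ig_j+f'(g)\,H_{ij},\qquad H_{ij}:=g_{ij}+\frac{g_i}{\chi_i}\delta_{ij}.
\]
Since $f'<0$ and $f''\geq 0$, and since $f''+f'/x\leq 0$ is equivalent to $f''/|f'|\leq 1/x$, the displayed matrix is negative semi-definite as soon as
\[
(\star)\qquad v^{T} H v\;\geq\;\frac{(\nabla g\cdot v)^{2}}{g}\qquad\text{for every }v\in\mathbb{R}^{n}.
\]
Granted strong concavity, the plain concavity $\rho_{ij}\leq 0$ is automatic: ellipticity makes $\mathrm{diag}(\rho_i/\chi_i)$ a positive diagonal matrix, so $\rho_{ij}\leq -\mathrm{diag}(\rho_i/\chi_i)\leq 0$. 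Thus everything reduces to $(\star)$.

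To prove $(\star)$, I would change variables to $\xi_i:=-\log \chi_i=\log y_i$ and consider
\[
\Psi(\xi):=\log g(\chi)=\log \sigma_k\!\left(e^{\xi_1},\ldots,e^{\xi_n}\right)=\log\!\sum_{|S|=k}\exp\!\Bigl(\sum_{i\in S}\xi_i\Bigr).
\]
As a LogSumExp of linear forms in $\xi$, $\Psi$ is convex, so $\Psi''(\xi)\geq 0$. The chain rule with $\chi_i=e^{-\xi_i}$ gives
\[
\Psi_{\xi_i\xi_j}=\chi_i\chi_j\Bigl(\Phi_{ij}+\frac{\Phi_i}{\chi_i}\delta_{ij}\Bigr),\qquad \Phi:=\log g,
\]
so, up to congruence by the positive diagonal $\mathrm{diag}(\chi_i)$, convexity of $\Psi$ is equivalent to the matrix $\Phi_{ij}+\delta_{ij}\Phi_i/\chi_i$ being positive semi-definite; multiplying through by $g>0$ converts this into exactly $(\star)$.

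The only real obstacle is spotting the right change of variables $\xi=-\log\chi$. Once made, the elementary convexity of LogSumExp turns the whole proposition into routine chain-rule bookkeeping, and the slack between the sharp constant $1/g$ afforded by $(\star)$ and the weaker constant $(k-1)/(kg)$ that would arise from the single case $f(x)=-x^{1/k}$ is exactly what allows the broader family of $f$'s satisfying $f''+f'/x\leq 0$.
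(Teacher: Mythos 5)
Your proposal is correct, and its skeleton matches the paper's: the ellipticity computation is identical, and both arguments observe that plain concavity follows from strong concavity plus ellipticity, so that everything hinges on one matrix inequality for $g=\sigma_{n-k}(\chi)/\sigma_n(\chi)=\sigma_k(\chi^{-1})$. Your inequality $(\star)$ is precisely equivalent (after dividing by $g$ and completing the logarithmic derivative) to the paper's Proposition 2.3, the strong concavity of $h=-\log\sigma_k(\chi^{-1})$, and your reduction of the full statement to $(\star)$ using $f''/|f'|\le 1/x$ is the same splitting the paper performs when it absorbs the $f'$ terms into the $f''$ term. Where you genuinely diverge is in how the key lemma is established: the paper simply cites the appendix of [FLM] (where Proposition 2.3 is proved via identities and Newton--Maclaurin type inequalities for elementary symmetric polynomials), whereas you prove it from scratch by substituting $\xi_i=-\log\chi_i$ and recognizing $\log\sigma_k(e^{\xi_1},\ldots,e^{\xi_n})$ as a LogSumExp of linear forms, hence convex; the diagonal congruence by $\mathrm{diag}(\chi_i)$ then converts $\Psi''\ge 0$ into exactly $(\star)$. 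This buys a short, self-contained, and conceptually transparent proof of the hardest ingredient, at the cost of giving the semi-definite inequality only in the "sharp constant $1/g$" form rather than the explicit entrywise identities the paper's computation records (which it later reuses, e.g.\ in the $B\le 0$ claim of Section 3). Your closing remark about the slack versus the constant $(k-1)/(kg)$ for $f(x)=-x^{1/k}$ is a correct observation and does not affect the argument.
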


We shall refer to the conditions in (\ref{condition}) as the structure conditions on $f$.

The proof is based on the following two propositions:

\begin{prop}
Let $g(\chi_1, \cdots, \chi_n)=\log \sigma_k(\chi)$ and $\chi\in \Gamma_n$, then
\begin{itemize}
  \item $g_i>0$,
  \item $g_{ij}\leq 0$,
  \item $g_{ij}+\frac{g_i}{\chi_j}\delta_{ij}\geq0$.
\end{itemize}
\end{prop}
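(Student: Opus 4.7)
The plan is to establish the three properties of $g = \log\sigma_k(\chi)$ by combining the classical concavity of $\sigma_k^{1/k}$ on $\Gamma_n$ with a logarithmic change of variables that converts the strong inequality into a standard log-sum-exp convexity.

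Ellipticity is immediate from $g_i = \sigma_{k-1}(\chi|i)/\sigma_k(\chi) > 0$ on $\Gamma_n$. For concavity of $g$ as a quadratic form, I would invoke the well-known fact that $\sigma_k^{1/k}$ is concave on $\Gamma_n$. Since $\log$ is concave and increasing on $\mathbb{R}_{>0}$, the composition $\log\sigma_k = k\log(\sigma_k^{1/k})$ is concave on $\Gamma_n$, which gives the Hessian inequality $(g_{ij}) \leq 0$. Equivalently, one can write $g_{ij} = \sigma_k^{-2}[\sigma_k\sigma_{k,ij} - \sigma_{k,i}\sigma_{k,j}]$, peel off a rank-one negative piece, and observe that the remainder $\sigma_k\sigma_{k,ij} - \tfrac{k-1}{k}\sigma_{k,i}\sigma_{k,j}$ is negative semidefinite precisely by concavity of $\sigma_k^{1/k}$.

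The core of the argument is the strong inequality. The key observation is the substitution $y_i = \log\chi_i$: setting $\tilde g(y) := g(e^{y_1},\ldots,e^{y_n})$, a direct chain-rule computation yields
\[
\partial_{y_i y_j}\tilde g \;=\; \chi_i\chi_j\, g_{ij} + \chi_i\, g_i\, \delta_{ij} \;=\; \chi_i \Bigl(g_{ij} + \tfrac{g_i}{\chi_j}\delta_{ij}\Bigr)\chi_j,
\]
so the matrix $\bigl(g_{ij} + (g_i/\chi_j)\delta_{ij}\bigr)$ is congruent, via the positive diagonal $\mathrm{diag}(\chi_1,\ldots,\chi_n)$, to $\mathrm{Hess}_y\tilde g$. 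Hence the strong inequality is equivalent to convexity of $\tilde g$ on $\mathbb{R}^n$. But
\[
\tilde g(y) \;=\; \log\sum_{|S|=k}\exp\Bigl(\sum_{i\in S} y_i\Bigr)
\]
is the logarithm of a sum of exponentials of linear functions; its Hessian at each point is a covariance matrix (with respect to the probability weights $e^{\sum_{i\in S}y_i}/\sum_{|S'|=k}e^{\sum_{i\in S'}y_i}$ on the $k$-subsets of $\{1,\ldots,n\}$), hence positive semidefinite. This gives convexity of $\tilde g$ and therefore the strong inequality.

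The main obstacle sidestepped by this approach is a bare-hands matrix-level verification: the off-diagonal entries $g_{ij}$ for $i\neq j$ are not signed, and checking semi-definiteness directly would require delicate cancellations among $\sigma_{k-2}(\chi|ij)$, $\sigma_{k-1}(\chi|i)$, $\sigma_{k-1}(\chi|j)$, and $\sigma_k(\chi|i)$. The exponential substitution reduces the problem to the well-known convexity of log-sum-exp and avoids these computations altogether.
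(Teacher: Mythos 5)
Your proof is correct. Note first that the paper does not actually prove this proposition: it defers to the appendix of \cite{FLM}, where the argument is carried out by direct manipulation of elementary symmetric polynomial identities (expanding $g_{ij}$ in terms of $\sigma_{k-1}(\chi|i)$, $\sigma_{k-2}(\chi|ij)$, etc., and invoking Newton--Maclaurin/G{\aa}rding-type inequalities to control the resulting quadratic form). Your route is genuinely different and, to my mind, cleaner: ellipticity is the same one-line computation; for concavity you correctly reduce to the classical concavity of $\sigma_k^{1/k}$ on $\Gamma_n$ via composition with the increasing concave function $k\log t$ (and your rank-one decomposition $\sigma_k\sigma_{k,ij}-\sigma_{k,i}\sigma_{k,j}=\bigl(\sigma_k\sigma_{k,ij}-\tfrac{k-1}{k}\sigma_{k,i}\sigma_{k,j}\bigr)-\tfrac1k\sigma_{k,i}\sigma_{k,j}$ is a valid alternative); and for the third inequality the congruence
\[
\partial^2_{y_iy_j}\tilde g=\chi_i\Bigl(g_{ij}+\tfrac{g_i}{\chi_j}\delta_{ij}\Bigr)\chi_j,\qquad y_i=\log\chi_i,
\]
correctly identifies the matrix in question, up to conjugation by $\mathrm{diag}(\chi)$, with the Hessian of $\tilde g(y)=\log\sum_{|S|=k}e^{\sum_{i\in S}y_i}$, whose positive semidefiniteness is the standard log-sum-exp/covariance computation. (Implicitly you are reading all three bullets as matrix inequalities, which is the interpretation the paper needs.) What your approach buys is a conceptual, essentially computation-free proof of the hardest bullet, replacing delicate cancellations among the $\sigma_{k-2}(\chi|ij)$ terms with a one-line convexity fact; what the computational route in \cite{FLM} buys is uniformity with the proof of the companion Proposition 2.3 for $h=-\log\sigma_k(\chi^{-1})$, where the analogous substitution must be combined with the inversion $\chi_i\mapsto\chi_i^{-1}$. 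The exponential-substitution device you use is the same one underlying Andrews' notion of inverse-concavity, so it fits naturally with the framework the authors cite.
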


\begin{prop}
Let $h(\chi_1, \cdots,\chi_n):=-g(\frac{1}{\chi_1}, \cdots, \frac{1}{\chi_n})=-\log \sigma_k(\chi^{-1})$ and $\chi \in \Gamma_n$, then
\begin{itemize}
  \item $h_i>0$,
  \item $h_{ij}\leq 0$,
  \item $h_{ij}+\frac{h_i}{\chi_j}\delta_{ij}\leq 0$.
\end{itemize}
\end{prop}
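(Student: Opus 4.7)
The plan is to deduce all three properties of $h$ from the corresponding properties of $g$ supplied by the previous proposition, via a direct chain-rule computation under the substitution $y_i = 1/\chi_i$. Since $\partial y_i/\partial\chi_j = -\delta_{ij}/\chi_i^2$, a first differentiation gives
\[
h_i = \frac{g_i(y)}{\chi_i^2},
\]
and ellipticity $h_i>0$ is immediate from $g_i(y)>0$. A second differentiation yields
\[
h_{ij} = -\frac{g_{ij}(y)}{\chi_i^2\chi_j^2} - \frac{2\,g_i(y)}{\chi_i^3}\,\delta_{ij}.
\]

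The main step is to establish strong concavity; ordinary concavity then follows from strong concavity combined with ellipticity, as noted after the structure conditions. Fix $\xi\in\mathbb{R}^n$ and introduce the test vector $\eta_i := \xi_i/\chi_i^2$. Using $h_i/\chi_i = g_i(y)/\chi_i^3$, the diagonal $-2g_i(y)/\chi_i^3$ in $h_{ij}$ absorbs one copy of $h_i/\chi_i$ and leaves
\[
\sum_{i,j}\Bigl(h_{ij} + \frac{h_i}{\chi_j}\delta_{ij}\Bigr)\xi_i\xi_j
= -\sum_{i,j} g_{ij}(y)\,\eta_i\eta_j \;-\; \sum_i \frac{g_i(y)}{\chi_i^3}\,\xi_i^2.
\]
Now invoke the strong concavity of $g$ (in its matrix form): $\sum g_{ij}(y)\eta_i\eta_j + \sum (g_i(y)/y_i)\eta_i^2 \geq 0$. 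Since $y_i = 1/\chi_i$, one has $\eta_i^2/y_i = \xi_i^2/\chi_i^3$, so
\[
-\sum_{i,j} g_{ij}(y)\,\eta_i\eta_j \;\leq\; \sum_i \frac{g_i(y)}{\chi_i^3}\,\xi_i^2,
\]
and the two terms in the displayed expression cancel, giving $\leq 0$. This proves strong concavity of $h$, and concavity follows.

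I do not anticipate a substantive obstacle: the computation is essentially a change-of-variables identity, and the strong concavity of $g$ delivers exactly the cancellation required. The one point requiring care is reading the hypotheses $g_{ij}\le 0$ and $g_{ij}+g_i\delta_{ij}/\chi_j\ge 0$ as matrix (quadratic-form) inequalities rather than entry-wise, and verifying that the factor $2$ in the diagonal $-2g_i/\chi_i^3$ of the Hessian of $h$ is precisely what is needed to accommodate both the $g_i/y_i$ contribution from strong concavity of $g$ and the additional $h_i/\chi_j\delta_{ij}$ correction appearing in the strong concavity of $h$.
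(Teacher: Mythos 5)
Your argument is correct, and it is genuinely different from what the paper does: the paper offers no proof of Proposition 2.3 at all, simply referring to the appendix of [FLM], where the statement is established by direct manipulation of the elementary symmetric polynomials $\sigma_{k-1}(\chi^{-1}|i)$, $\sigma_{k-2}(\chi^{-1}|i,j)$ and Newton--Maclaurin type identities. You instead deduce Proposition 2.3 formally from Proposition 2.2 via the substitution $y_i=1/\chi_i$, and the bookkeeping checks out: $h_i=g_i(y)/\chi_i^2>0$; the Hessian picks up the diagonal term $-2g_i(y)\chi_i^{-3}\delta_{ij}$, of which one copy of $g_i(y)/\chi_i^3$ is cancelled by the correction $h_i\delta_{ij}/\chi_j$, leaving exactly $-\sum g_{ij}(y)\eta_i\eta_j-\sum g_i(y)\xi_i^2/\chi_i^3$ with $\eta_i=\xi_i/\chi_i^2$; and since $\eta_i^2/y_i=\xi_i^2/\chi_i^3$, the strong convexity inequality $g_{ij}+g_i\delta_{ij}/y_j\geq 0$ for $g$ delivers precisely the needed bound. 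Your reading of the inequalities as quadratic-form (not entrywise) statements is the intended one --- that is how they are used in the proof of Proposition 2.1 and in the maximum-principle arguments later --- and your observation that concavity of $h$ follows from strong concavity plus ellipticity matches the remark after the structure conditions. What your route buys is a self-contained, computation-free reduction of Proposition 2.3 to Proposition 2.2 (note you never even need $g_{ij}\leq 0$); what it costs is that Proposition 2.2 itself must still be proved by the direct symmetric-function computations of the [FLM] appendix, so the hard work is relocated rather than eliminated.
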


We refer reader to the appendix of ~\cite{FLM} for detailed proof of Propositions 2.2 and 2.3.

\begin{proof} [Proof of Proposition 2.1]
Direct computation shows
\[
\rho_i =-f' \sigma_{k-1}(\chi^{-1}|i)\frac{1}{\chi_i^2} >0.
\]
Concavity of $\rho$ follows from strong concavity and $\rho_i>0$, hence it is suffice to show that
\[
\rho_{ij}+\frac{\rho_i}{\chi_j} \delta_{ij}\leq 0.
\]
Direct computation yields
\begin{align}
\rho_{ij}+\frac{\rho_i}{\chi_j}\delta_{ij} &=f''\sigma_{k-1}(\chi^{-1}|i)\sigma_{k-1}(\chi^{-1}|j)\frac{1}{\chi_i^2}\frac{1}{\chi_j^2}\\ \notag&+f'(\sigma_{k-2}(\chi^{-1}|i,j)\frac{1}{\chi_i^2}\frac{1}{\chi_j^2}(1-\delta_{ij})+\sigma_{k-1}(\chi^{-1}|i)\frac{1}{\chi_i^3}\delta_{ij}.
\end{align}
Since $f''+\frac{f'}{x}\leq 0$, $f''\geq0$, we have
\begin{align}
\rho_{ij}+\frac{\rho_i}{\chi_j}\delta_{ij}&\leq f''\{\frac{\sigma_{k-1}(\chi^{-1}|i)\sigma_{k-1}(\chi^{-1}|j)}{\chi_i^2\chi_j^2}\\ \notag &-\sigma_{k}(\chi^{-1})[\frac{\sigma_{k-2}(\chi^{-1}|i,j)}{\chi_i^2\chi_j^2}(1-\delta_{ij})+\frac{\sigma_{k-1}(\chi^{-1}|i)}{\chi_i^3}\delta_{ij}]\}\leq0.
\end{align}
The last inequality follows from Proposition 2.3 and the fact
\begin{align}
h_{ij}+\frac{h_i}{\chi_j}\delta_{ij}&=\frac{1}{\sigma_k(\chi^{-1})^2}\{\frac{\sigma_{k-1}(\chi^{-1}|i)\sigma_{k-1}(\chi^{-1}|j)}{\chi_i^2\chi_j^2}\\ \notag &-\sigma_{k}(\chi^{-1})[\frac{\sigma_{k-2}(\chi^{-1}|i,j)}{\chi_i^2\chi_j^2}(1-\delta_{ij})+\frac{\sigma_{k-1}(\chi^{-1}|i)}{\chi_i^3}\delta_{ij}]\}.
\end{align}
\end{proof}

For a hermitian matrix $A=(a_{i\bar{j}})$, let its eigenvalues be $\chi=(\chi_1,\cdots, \chi_n)$. For $f\in C^{\infty}(\mathbb{R}_{>0}, \mathbb{R})$, we define
$$F(A):=\rho(\chi_1,\cdots, \chi_n)= f(\frac{\sigma_{n-k}(\chi)}{\sigma_n(\chi)}).$$
Denote\[
F^{i\bar{j}}:=\frac{\partial F}{\partial a_{i\bar{j}}}, \quad F^{i\bar{j},k\bar{l}}:=\frac{\partial^2 F}{\partial a_{i\bar{j}} a_{k\bar{l}}}.
\]

It is a classical result that the properties of $F(A)$ follow from those of $\rho(\chi)$, see e.g., Theorem 1.4 of~\cite{S}. In particular, Proposition 2.1 leads to the following:

\begin{prop} Let $F(A)$ be defined as above, let $f\in C^{\infty}( \mathbb{R}_{>0}, \mathbb{R})$ satisfing (\ref{condition}). Then $F$ satisfies:
\begin{itemize}
  \item Ellipticity: $F^{i\bar{j}}>0$,
  \item Concavity: $F^{i\bar{j}, k\bar{l}}\leq 0$,
  \item Strong concavity: at $A=\text{diag}(\chi_1, \cdots, \chi_n)$, $F^{i\bar{i},j\bar{j}}+\frac{F^{i\bar{i}}}{\chi_j}\delta_{ij}\leq 0$.
\end{itemize}
\end{prop}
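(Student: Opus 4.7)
The approach is to invoke the classical correspondence between spectrally-defined matrix functions and their underlying symmetric functions on eigenvalues. Since $F(A)$ depends only on the eigenvalues of $A$, it is invariant under unitary conjugation $A \mapsto UAU^*$, and this invariance reduces the verification of all three conditions at an arbitrary Hermitian $A$ to the case $A = \text{diag}(\chi_1,\cdots,\chi_n)$, where the conclusions of Proposition 2.1 on $\rho$ become available.

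At such a diagonal $A$ with (initially) distinct eigenvalues, the formulas of Theorem 1.4 of \cite{S} identify the only nonvanishing components of the first and second derivatives of $F$:
\[
F^{i\bar{j}}(A) = \rho_i\,\delta_{ij},\qquad F^{i\bar{i},\,j\bar{j}}(A) = \rho_{ij},\qquad F^{i\bar{j},\,j\bar{i}}(A) = \frac{\rho_i-\rho_j}{\chi_i-\chi_j}\ \text{for } i\neq j,
\]
with the remaining components of the Hessian vanishing; coincident eigenvalues are handled by continuity. Ellipticity is now immediate since $(F^{i\bar{j}})$ is diagonal with positive entries $\rho_i > 0$. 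Strong concavity at diagonals reduces directly to $\rho_{ij}+\rho_i/\chi_j\,\delta_{ij}\leq 0$, which is part of Proposition 2.1.

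The single step that needs a short argument is concavity. Evaluating the second directional derivative of $F$ along any Hermitian perturbation $B = (b_{i\bar{j}})$ at $A$, and using that $b_{j\bar{i}} = \overline{b_{i\bar{j}}}$, one finds
\[
F^{p\bar{q},\,r\bar{s}}(A)\,b_{p\bar{q}}\,b_{r\bar{s}} \;=\; \sum_{i,j}\rho_{ij}\,b_{i\bar{i}}\,b_{j\bar{j}} \;+\; \sum_{i\neq j}\frac{\rho_i-\rho_j}{\chi_i-\chi_j}\,|b_{i\bar{j}}|^2.
\]
The first sum is nonpositive by the concavity of $\rho$ from Proposition 2.1. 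For the second sum, the symmetric function $\rho$ restricted to $t\mapsto\rho(\cdots,\chi_i+t,\cdots,\chi_j-t,\cdots)$ is concave in $t$ and takes equal values at $t=0$ and $t=\chi_j-\chi_i$ by symmetry; together these force $(\rho_i-\rho_j)/(\chi_i-\chi_j)\leq 0$, so the second sum is also nonpositive, completing the proof.

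The only real obstacle is the treatment of the off-diagonal contribution $(\rho_i-\rho_j)/(\chi_i-\chi_j)$ in the concavity calculation, since this is not an eigenvalue-derivative quantity per se but arises from the spectral decomposition of $F$. Fortunately its sign is forced by symmetry and concavity of $\rho$ alone, so no input beyond Proposition 2.1 is required.
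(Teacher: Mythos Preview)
Your proposal is correct and follows the same route as the paper, which simply cites Theorem~1.4 of \cite{S} for the passage from the eigenvalue function $\rho$ to the matrix function $F$ and does not give any further argument. You have additionally spelled out the second-derivative formula at a diagonal $A$ and the sign of the off-diagonal term $(\rho_i-\rho_j)/(\chi_i-\chi_j)$ via symmetry plus concavity of $\rho$, details the paper leaves entirely to the reference.
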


\section{Proof of the main theorem}

\subsection{Long time existence}\

Differentiating the flow (\ref{flow}), we get
\begin{align}\notag
\frac{\partial}{\partial t}(\frac{\partial \varphi}{\partial
t})=F^{i\bar{j}}(\chi)\partial_i\partial_{\bar{j}}(\frac{\partial \varphi}{\partial
t}).
\end{align}
From Proposition 2.4, $\frac{\partial \varphi}{\partial
t}$ satisfies a parabolic equation. By maximum principle, we have
\[
\min_{t=0} \frac{\partial
\varphi}{\partial t} \leq\frac{\partial \varphi}{\partial t}\leq \max_{t=0} \frac{\partial
\varphi}{\partial t},
\]
thus
\[
\min F(\chi_{_0}) \leq F(\chi_{\varphi})=f(\sigma_{k}(\chi_{\varphi}^{-1})) \leq \max F(\chi_{_0}).
\]
By the monotonicity of $f$, there exist two universal positive constants $\lambda_1$ and $\lambda_2$ such that
\begin{align} \label{bound}
\lambda_1 \leq \sigma_k(\chi_{\varphi}^{-1})\leq \lambda_2.
\end{align}
(\ref{bound}) implies $\chi_{\varphi}$ remains K\"{a}hler; i.e., $\chi_{\varphi}>0$. Also note that with the bound (\ref{bound}), regarding the estimate aspect, $f$, $f'$, $f''$ are all bounded.

Concerning the behavior of the flow (\ref{flow}) for an arbitrary triple data $(M, \omega, \chi)$, we have:
\begin{thm}
Let $(M, \omega, \chi)$ be given as above, the general inverse $\sigma_k$ flow (\ref{flow}) has long time existence.
\end{thm}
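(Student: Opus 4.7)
The plan is to establish, on any finite time interval $[0,T]$, a priori estimates for $\varphi$ up to $C^{2,\alpha}$, then bootstrap to higher regularity via parabolic Schauder theory, and finally apply a continuation argument. Short time existence follows from the ellipticity of $F$ proved in Proposition~2.4, so the core task is the a priori estimates. Throughout, one exploits the fact that (\ref{bound}) already gives uniform two-sided bounds on $\sigma_k(\chi_\varphi^{-1})$ and hence on $f$, $f'$, $f''$ evaluated along the flow.

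First I would establish the $C^0$ estimate. Since $\partial_t\varphi = F(\chi_\varphi)-C$ is uniformly bounded (by (\ref{bound}) and the monotonicity of $f$), we immediately get $\|\varphi(\cdot,t)\|_{L^\infty}\le Ct$, which is sufficient for long time existence on any compact time interval. (A sharper, time-independent oscillation bound can be obtained later using the Moser-type argument of \cite{SW,FLM}, but is not needed here.)

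Next comes the main technical step: the $C^2$ estimate, i.e., an upper bound for $\operatorname{tr}_\omega\chi_\varphi$. Combined with the lower bound on eigenvalues coming from (\ref{bound}), this forces $\chi_\varphi$ to lie between two fixed positive multiples of $\omega$, and thus makes the flow uniformly parabolic. I would apply the maximum principle to the test quantity
\begin{equation*}
Q := \log\operatorname{tr}_\omega\chi_\varphi - A\varphi
\end{equation*}
for a sufficiently large constant $A>0$. Differentiating the flow twice and computing $(\partial_t - F^{i\bar j}\partial_i\partial_{\bar j})Q$ at an interior maximum, the dangerous third-derivative terms are absorbed using the strong concavity condition
\begin{equation*}
F^{i\bar i,j\bar j} + \frac{F^{i\bar i}}{\chi_j}\delta_{ij}\le 0
\end{equation*}
from Proposition~2.4, exactly as in \cite{FLM}. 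The curvature terms of $\omega$ are absorbed into $-A F^{i\bar j}g_{i\bar j}\operatorname{tr}_\omega\chi_\varphi$ by taking $A$ large, using uniform positive lower bound on $\sum_i F^{i\bar i}$ which follows from ellipticity and (\ref{bound}). This yields a uniform upper bound for $\operatorname{tr}_\omega\chi_\varphi$ on $[0,T]$ depending only on $T$ and the initial data.

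With $\chi_\varphi$ uniformly equivalent to $\omega$ on $[0,T]$, the equation is uniformly parabolic with a concave nonlinearity, so the Evans--Krylov theorem yields a $C^{2,\alpha}$ estimate in space (and hence in parabolic norms via the equation itself), and then parabolic Schauder theory provides $C^{k,\alpha}$ estimates for all $k$. Standard continuation then extends the flow to $[0,\infty)$. The hard step, as usual for fully nonlinear flows of Monge--Amp\`ere type, is the second-order estimate; the main thing to verify is that the specific structure conditions on $f$ in (\ref{condition1}) propagate, via Proposition~2.1 and Proposition~2.4, into exactly the strong concavity inequality needed to close the maximum-principle computation in the general setting.
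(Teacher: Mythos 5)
Your overall architecture (a second-order estimate, then Evans--Krylov and Schauder, then continuation) matches the paper's, but the core maximum-principle step has a genuine gap, and it is not the argument the paper uses for this theorem. The paper applies the maximum principle to the plain quantity $G=\operatorname{tr}_\omega\chi_\varphi$ --- no logarithm and no $-A\varphi$ term. Then the third-order terms are exactly $g^{p\bar q}F^{i\bar j,k\bar l}\chi_{i\bar j,p}\chi_{k\bar l,\bar q}\le 0$ by mere concavity (strong concavity is not needed here), the curvature commutators are bounded by $C_3+C_4G$, and one obtains the differential inequality $\partial_t G\le F^{i\bar j}G_{i\bar j}+C_3+C_4G$, hence a time-dependent bound $G\le Ce^{C_4t}$ --- which is all that long time existence requires.

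Your test function $Q=\log\operatorname{tr}_\omega\chi_\varphi-A\varphi$ is instead the one used in Section 3.2 for the partial $C^2$ estimate, and there the argument does not close without the cone condition $[\chi]\in\mathcal{C}_k(\omega)$, whereas Theorem 3.1 is asserted for an arbitrary triple $(M,\omega,\chi)$. The obstruction is structural: at a maximum of $Q$, after absorbing the third-order terms (which now genuinely requires the strong concavity claim $B\le 0$, because the logarithm produces an extra positive gradient term), the inequality you are left with controls quantities like $\sum_iF^{i\bar i}$ or $\sum_iF^{i\bar i}\chi_{0i\bar i}$ from above. But $F^{i\bar i}=-f'\,\sigma_{k-1}(\chi^{-1}|i)\,\chi_i^{-2}$ is \emph{small} precisely where $\chi_i$ is large, so an upper bound on $\sum_iF^{i\bar i}$ yields no upper bound on $\operatorname{tr}_\omega\chi_\varphi$; converting the maximum-principle inequality into a bound on $\chi_1$ is exactly where Lemma~\ref{lemma1} (hence the cone condition) enters for $k<n$. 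Moreover, the term ``$-AF^{i\bar j}g_{i\bar j}\operatorname{tr}_\omega\chi_\varphi$'' into which you propose to absorb the curvature does not arise: $-A\varphi$ contributes $-A\dot\varphi+AF^{i\bar j}(\chi_{i\bar j}-\chi_{0,i\bar j})$, and $F^{i\bar j}\chi_{i\bar j}=-kf'\sigma_k(\chi_\varphi^{-1})$ is already bounded by (\ref{bound}), so no helpful term proportional to $\operatorname{tr}_\omega\chi_\varphi$ appears. The repair is to drop the logarithm and the $-A\varphi$ term and settle for an exponentially growing bound, as the paper does.
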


\begin{proof}
Following~\cite{Ch2}, we derive a time-dependent $C^2$ estimates for the potential $\varphi$. Since $\chi_{\varphi}>0$, it is suffice to derive an upper bound for $G:=tr_{\omega} \chi_{\varphi}=g^{p\bar{q}}\chi_{p\bar{q}}$.  By a straightforward computation, we get

\begin{align} \label{2.5}
\frac{\partial G}{\partial t} & = g^{p\bar{q}}F^{i\bar{j}, k\bar{l}}\chi_{i\bar{j},p}\chi_{k\bar{l},\bar{q}}+g^{p\bar{q}}F^{i\bar{j}}\chi_{i\bar{j},p\bar{q}} \\
\notag & = F^{i\bar{j}}(g^{p\bar{q}} \chi_{p\bar{q}})_{i\bar{j}} +g^{p\bar{q}}F^{i\bar{j}, k\bar{l}}\chi_{i\bar{j},p}\chi_{k\bar{l},\bar{q}} + g^{p\bar{q}} F^{i\bar{j}}( \chi_{m\bar{q}}R_{pi\bar{j}}^{m}-\chi_{m\bar{j}}R_{pi\bar{q}}^{m}).
\end{align}

The second term is non-positive by the concavity of $F$. For last term, by choosing normal coordinates, it is easy to see
\begin{align} \label{2.4}
   g^{p\bar{q}} F^{i\bar{j}}( \chi_{m\bar{q}}R_{pi\bar{j}}^{m}-\chi_{m\bar{j}}R_{pi\bar{q}}^{m}) \leq C_3+C_4 G
\end{align}
for two universal positive constants.

Now the upper bound of $G$ follows from standard maximum principle. Consequently, we have long time existence for the flow (\ref{flow}).
\end{proof}

In what follows, we will give the proof of the main theorem. Following~\cite{FLM}, we first derive a partial $C^2$ estimate for the potential $\varphi$ depending on the $C^0$ norm of $\varphi$ when the condition $[\chi]\in \mathcal{C}_k(\omega)$ holds. Then we follow the method developed in ~\cite{SW} to get uniform $C^0$ estimate and the convergence of the flow.

\subsection{Partial $C^2$ estimate}\

Without loss of generality, we can assume initial metric $\chi_{_0}$ is the metric $\chi'$ in $[\chi]$ satisfying cone condition (\ref{cone}). Since different initial data differs by a fixed potential function, the same estimates will carry over. Again, since $\chi_{\varphi}>0$, it is suffice to bound $\chi_{\varphi}$ from above. Consider $G(x,t,\xi):=\log
(\chi_{i\bar{j}}\xi^i\xi^{\bar{j}}) - A \varphi$, for $x\in M$ and $\xi\in \mathbf{T}_{x}^{(1,0)}M$ with
$g_{i\bar{j}}\xi^{i}\xi^{\bar j}=1$. $A$ is a constant to be determined. Assume $G$ attains maximum at
$(x_{_0},t_{_0})\in M\times [0,t]$, along the direction
$\xi_{_0}$. Choose normal coordinates of $\omega$ at $x_{_0}$, such
that $\xi_{_0}=\frac{\partial}{\partial z_1}$ and $(\chi_{i\bar{j}})$ is diagonal
at $x_{_0}$. By the definition of $G$, it is easy to see that
$\chi_{1\bar{1}}=\chi_{_1}$ is the largest eigenvalue of
$\{\chi_{i\bar{j}}\}$ at $x_{_0}$. We can
assume $t_{_0}>0$, otherwise we would be done. Thus locally we consider $H:=\log \chi_{1\bar{1}}-A\varphi$ instead, which achieves its maximum at $(x_{_0},t_{_0})$ as well.

For simplicity, we denote $\chi=\chi_{\varphi}$. At $x_{_0}$, assume that $\chi=\text{diag}(\chi_1, \cdots, \chi_n)$ with $\chi_1\geq \chi_2 \cdots \geq \chi_n>0$. We shall use $\chi$ to denote the hermitian matrix $(\chi_{i\bar{j}})$ or the set of the eigenvalues of $\chi_{\varphi}$ interchangeably when no confusion arises.

We compute the evolution of $H$:
\begin{align}\notag
\frac{\partial H}{\partial
t}&=\frac{\chi_{1\bar{1},t}}{\chi_{1\bar{1}}}-A\frac{\partial \varphi}{\partial
t}=\frac{F^{i\bar{j}}\chi_{i\bar{j},1\bar{1}}+F^{i\bar{j}, k\bar{l}}\chi_{i\bar{j},1}\chi_{k\bar{l},\bar{1}}}{\chi_{1\bar{1}}}-A\frac{\partial \varphi}{\partial
t},\\ \notag
H_{i\bar{i}}&=\frac{\chi_{1\bar{1},i\bar{i}}}{\chi_{1\bar{1}}}-\frac{|\chi_{1\bar{1},i}|^2}{\chi_{1\bar{1}}^2}-A\varphi_{i\bar{i}}.
\end{align}

By the maximum principle, at $(x_{_0},t_{_0})$ we have

\begin{eqnarray} \label{3.2}
0&\leq& \frac{\partial H}{\partial t} - \sum_{i=1}^{n}F^{i\bar{i}}H_{i\bar{i}}\\ \notag
 &=&
  \frac{1}{\chi_{1\bar{1}}}
 F^{i\bar{i}}(\chi_{i\bar{i},1\bar{1}}-\chi_{1\bar{1},i\bar{i}})-A\frac{\partial \varphi}{\partial t} +AF^{i\bar{i}} \varphi_{i\bar{i}}+B,
\end{eqnarray}
where $$B=\frac{1}{\chi_{1\bar{1}}}\sum_{1\leq i,j,k,l\leq
n}F^{i\bar{j},k\bar{l}}\chi_{i\bar{j},1}\chi_{k\bar{l},\bar{1}}+\sum_{i=1}^{n}F^{i\bar{i}}\frac{|\chi_{1\bar{1},i}|^2}{\chi_{1\bar{1}}^2}$$
is the collection of all terms involving the third order derivatives.

We claim that $B\leq 0$, whose proof will be presented at the end of this section. Assuming that, (\ref{3.2}) leads to
\begin{align} \label{3.3}
\frac{1}{\chi_{1\bar{1}}}
 F^{i\bar{i}}(\chi_{i\bar{i},1\bar{1}}-\chi_{1\bar{1},i\bar{i}})\geq A\frac{\partial \varphi}{\partial t} -AF^{i\bar{i}} \varphi_{i\bar{i}}.
\end{align}

We simplify the left hand side of (\ref{3.3}) by the Ricci identity:
\begin{align} \label{3.4}
LHS
& =
\frac{1}{\chi_{1\bar{1}}}\sum_{i=1}^{n}F^{i\bar{i}}(\chi_{i\bar{i}}R_{i\bar{i}1\bar{1}}-\chi_{1\bar{1}}R_{1\bar{1}i\bar{i}})
\\ \notag
&\leq  \frac{C_1\sum_{i=1}^n F^{i\bar{i}}\chi_{i}}{\chi_{1\bar{1}}} -\sum_{i=1}^{n} F^{i\bar{i}}
R_{1\bar{1}i\bar{i}}\\\notag &\leq
\frac{C_0}{\chi_{1\bar{1}}}+C_2 \sum_{i=1}^{n} F^{i\bar{i}}.
\end{align}
Note that for the bound on $\sum_{i=1}^n F^{i\bar{i}}\chi_{i}$ we have used  (\ref{bound}) and the following computation:
\begin{align}
\sum_{i=1}^n F^{i\bar{i}}\chi_{i}&= -f' \sum_{i=1}^n \sigma_{k-1}(\chi^{-1}|i) \frac{1}{\chi_i^2} \chi_i\\
\notag &= -f' \sum_{i=1}^n \sigma_{k-1}(\chi^{-1}|i) \frac{1}{\chi_i}\\
\notag &= -kf' \sigma_k(\chi^{-1}) \leq C.
\end{align}

To deal with the right hand side of (\ref{3.3}), we divide into two cases:

{\bf Case 1: $k< n$}\

In this case, we have the following technical lemma due to the cone condition.
\begin{lem} \label{lemma1}
For $k<n$, assume that $\chi_{_0}=\chi'\in [\chi]$ is a K\"{a}hler form satisfying the cone condition (\ref{cone}), also assume that $C_1\leq \sigma_k(\chi^{-1}) \leq C_2$ for two universal constants, then there exists a universal constant $N$, such that if $\frac{\chi_1}{\chi_n}\geq N$, then there exists a universal constant $\theta>0$, such that
\begin{align}
\sigma_k^{\frac{1}{k}}(\frac{\chi_{0i\bar{i}}}{\chi_i^2})\geq (1+\theta) c_k^{-1/k}\sigma_k^{2/k}(\chi^{-1}).
\end{align}
\end{lem}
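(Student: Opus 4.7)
My plan is to prove the inequality by combining a Cauchy--Schwarz pairing adapted to the weights $\chi_{0i\bar i}$ with the strict cone condition on $\chi_0$, using the hypothesis $\chi_1/\chi_n\geq N$ to supply the multiplicative $(1+\theta)$-gap.

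The starting identity is that, for each $k$-subset $I\subset\{2,\ldots,n\}$,
\[
\prod_{i\in I}\frac{1}{\chi_i}=\bigg(\prod_{i\in I}\frac{\chi_{0i\bar i}}{\chi_i^{2}}\bigg)^{1/2}\bigg(\prod_{i\in I}\frac{1}{\chi_{0i\bar i}}\bigg)^{1/2}.
\]
Summing over such $I$ and applying Cauchy--Schwarz gives
\[
\sigma_k(\chi^{-1}|1)^{2}\leq \sigma_k\bigg(\frac{\chi_{0i\bar i}}{\chi_i^{2}}\bigg|1\bigg)\cdot\sigma_k\bigg(\frac{1}{\chi_{0i\bar i}}\bigg|1\bigg).
\]
Proposition~2.4 of~\cite{FLM} translates the cone condition into the pointwise strict inequalities $\sigma_k(\chi_0^{-1}|j)<c_k$ for every $j$; compactness of $M$ upgrades this to a uniform gap $\sigma_k(\chi_0^{-1}|1)\leq c_k-\delta$ for some universal $\delta>0$, which by Schur's majorization (relating diagonal entries of $\chi_0$ to its eigenvalues) transfers to $\sigma_k(1/\chi_{0i\bar i}|1)$ in the chosen frame. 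Combining these yields
\[
\sigma_k\bigg(\frac{\chi_{0i\bar i}}{\chi_i^{2}}\bigg|1\bigg)\geq \frac{\sigma_k(\chi^{-1}|1)^{2}}{c_k-\delta}.
\]

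To finish, I would invoke the hypothesis $\chi_1/\chi_n\geq N$ together with the two-sided bound $C_1\leq\sigma_k(\chi^{-1})\leq C_2$; the latter yields the pointwise estimate $\chi_1\geq C_2^{-1/k}$ and thus localizes the spectrum of $\chi^{-1}$. The aim is to quantitatively compare the restricted quantity $\sigma_k(\chi^{-1}|1)$ with the full $\sigma_k(\chi^{-1})$: the omitted term $(1/\chi_1)\sigma_{k-1}(\chi^{-1}|1)$ is controlled in terms of $1/N$ using the relation $1/\chi_1\leq (1/N)(1/\chi_n)$. Then fixing $\theta>0$ with $(c_k/(c_k-\delta))^{1/k}>1+\theta$ and choosing $N$ large enough for the error from this comparison to lie below $(1+\theta)^{-1}$, I obtain the required estimate after taking $k$-th roots and using the trivial bound $\sigma_k(\chi_{0i\bar i}/\chi_i^{2})\geq \sigma_k(\chi_{0i\bar i}/\chi_i^{2}|1)$.

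The principal obstacle is this final quantitative comparison between $\sigma_k(\chi^{-1}|1)$ and $\sigma_k(\chi^{-1})$: it reduces to a delicate elementary estimate among the symmetric functions of the positive numbers $1/\chi_i$ under the constraints $C_1\leq \sigma_k(\chi^{-1})\leq C_2$ and $\chi_1/\chi_n\geq N$. This is precisely the step that uses the hypothesis $\chi_1/\chi_n\geq N$ in an essential way, forcing the contribution from the largest eigenvalue $\chi_1$ of $\chi$ to be quantitatively small so that it can be absorbed into the margin $\delta$ supplied by the strict cone condition.
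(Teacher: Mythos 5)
The paper itself does not prove this lemma --- it simply cites Theorem~2.8 of~\cite{FLM} --- so your proposal has to stand on its own, and it does not close. The first two steps are essentially sound: the restricted Cauchy--Schwarz inequality over $k$-subsets of $\{2,\dots,n\}$ is correct, and the transfer of the cone condition to the diagonal entries $\chi_{0i\bar i}$ in the frame diagonalizing $\chi$ does work (Schur--Horn majorization of the diagonal of the $(n-1)\times(n-1)$ principal minor by its eigenvalues, Schur-convexity of $x\mapsto\sigma_k(x^{-1})$, and Cauchy interlacing give $\sigma_k\bigl(1/\chi_{0i\bar i}\big|1\bigr)\le\sigma_k(\chi_0^{-1}|j_{\max})\le c_k-\delta$), although you would need to spell this out. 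The fatal gap is the final step: the claim that the omitted term $\chi_1^{-1}\sigma_{k-1}(\chi^{-1}|1)$ is $O(1/N)$ relative to $\sigma_k(\chi^{-1})$, i.e.\ that $\sigma_k(\chi^{-1}|1)\ge(1-\epsilon(N))\,\sigma_k(\chi^{-1})$ with $\epsilon(N)\to0$, is false. Take $n=3$, $k=2$ and $\chi=\mathrm{diag}(M,M,M^{-1})$: then $\chi_1/\chi_3=M^2\ge N$ and $\sigma_2(\chi^{-1})=2+M^{-2}\in[2,3]$, so both hypotheses hold with universal $C_1,C_2$, yet $\sigma_2(\chi^{-1}|1)=1/(\chi_2\chi_3)=1$, so the omitted term $\chi_1^{-1}\sigma_1(\chi^{-1}|1)=1+M^{-2}$ is about \emph{half} of $\sigma_2(\chi^{-1})$ no matter how large $N$ is. The problem is that $\chi_1/\chi_n\ge N$ does not prevent several other eigenvalues from being comparable to $\chi_1$; in general one only has $\sigma_k(\chi^{-1}|1)\ge\binom{n}{k}^{-1}\sigma_k(\chi^{-1})$, a fixed constant loss.

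This loss is not absorbable. In the example your chain gives $\sigma_2(\chi_0/\chi^2)\ge\sigma_2(\chi^{-1}|1)^2/(c_2-\delta)\approx 1/(c_2-\delta)$, while the target is $(1+\theta)^2c_2^{-1}\sigma_2(\chi^{-1})^2\approx 4(1+\theta)^2/c_2$; since the cone-condition margin $\delta$ may be arbitrarily small, you are short by roughly a factor of $4$. So the difficulty you flag at the end as ``a delicate elementary estimate'' is not a technical point to be filled in later --- the inequality you need there is simply wrong, and the strategy of discarding only the index $i=1$ cannot work. The genuine proof (Theorem~2.8 of~\cite{FLM}) must exploit the structure differently, in particular retaining the terms of $\sigma_k(\chi_{0i\bar i}/\chi_i^2)$ that involve the small entries $\chi_i$ (these are exactly the terms that become large when the eigenvalue ratio degenerates) rather than throwing them away through a single restricted Cauchy--Schwarz; you should consult that proof rather than attempt to repair this step.
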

We refer reader to Theorem 2.8 of ~\cite{FLM} for a proof.\\

{\bf Case 1.(a): $\frac{\chi_1}{\chi_n}\geq N$}, where $N$ is given in Lemma~\ref{lemma1}\

Applying the Lemma~\ref{lemma1}, we claim that
there exists a universal constant $\epsilon>0$ such that
\begin{align}  \label{3.7}
\frac{\partial \varphi}{\partial t} -F^{i\bar{i}}\chi_{i\bar{i}} + (1-\epsilon) F^{i\bar{i}}\chi_{0i\bar{i}}\geq 0.
\end{align}

Indeed, by direct computation, we have
\begin{align}
\sum_{i=1}^n F^{i\bar{i}}\chi_{0i\bar{i}}&=-f' \sum_{i=1}^n \sigma_{k-1}(\chi^{-1}|i) \frac{\chi_{0i\bar{i}}}{\chi_i^2}\\
\notag &\geq-kf' \sigma_{k}^{1-1/k}(\chi^{-1})\sigma_{k}^{\frac{1}{k}}(\frac{\chi_{0i\bar{i}}}{\chi_i^2})\\
  \notag &\geq-kf' \sigma_{k}^{1-1/k}(\chi^{-1}) (1+\theta)c_k^{-1/k} \sigma_k^{2/k} (\chi^{-1}).
\end{align}
The first inequality follows from the G{\aa}rding's inequality.

Therefore by taking $\epsilon$ such that $(1-\epsilon)(1+\theta)=1$, (\ref{3.7}) is reduced to
\begin{align} \label{3.9}
\frac{\partial \varphi}{\partial t} -F^{i\bar{i}}\chi_{i\bar{i}}-kf'\sigma_k^{1+1/k}(\chi^{-1}) c_k^{-1/k} \geq 0.
\end{align}

By scaling, we can assume $c_k=1$, and modifying $f$ by adding a constant, we can further assume that $f(1)=0$. Plugging $F^{i\bar{i}}$ and letting $x=\sigma_k(\chi^{-1})$, (\ref{3.9}) is equivalent to
\begin{align}
f(x)+kf'(x) x- kf'(x) x^{1+1/k}\geq 0.
\end{align}
The inequality above holds provided $f''+\frac{f'}{x}\leq 0$ and $f(1)=0$.

Combining (\ref{3.3}), (\ref{3.4}) and (\ref{3.7}), we have
\begin{align}  \label{3.11}
 A\epsilon\sum_{i=1}^n F^{i\bar{i}}\chi_{0i\bar{i}} \leq \frac{C_1}{\chi_1}+C_2\sum_{i=1}^n F^{i\bar{i}}.
\end{align}

Since $\chi_{_0}$ is a fixed form, there exists a universal constant $\lambda>0$ such that
\[
A\lambda \sum_{i=1}^n F^{i\bar{i}} \leq A\epsilon\sum_{i=1}^n F^{i\bar{i}}\chi_{0i\bar{i}}.
\]

Hence in (\ref{3.11}), taking $A$ such that $A\lambda-C_2=1$, an upper bound for $\chi_1$ will follow once we have shown $\sum_{i=1}^n F^{i\bar{i}}$ is bounded from below. For that we have
\begin{align}
\sum_{i=1}^n F^{i\bar{i}}&=-f' \sum \sigma_{k-1}(\chi^{-1}|i) \frac{1}{\chi_i^2}\\ \notag
 &\geq -kf' \sigma_k^{1-1/k}(\chi^{-1}) \sigma_k^{\frac{1}{k}}(\frac{1}{\chi_i^2}) \geq \tilde{C} \sigma_k^{1+1/k}(\chi^{-1}) \geq C.
\end{align}

{\bf Case 1.(b): $\frac{\chi_1}{\chi_n}\leq N$}\

In this case, the upper bound for $\chi_1$ follows directly from the lower bound (\ref{bound}) on $\sigma_k(\chi^{-1})$. Since
\begin{align}
\lambda_1\leq \sigma_k(\chi^{-1})\leq { n\choose k} \frac{1}{\chi_n^k},
\end{align}
then we get an upper bound for $\chi_n$, thus an upper bound for $\chi_1$ as $\chi_1\leq N\chi_n$.\\

{\bf Case 2: $k=n$}\

In this case, we proceed (\ref{3.3}) directly. Since we only concern $f$ on the closed interval $[\lambda_1, \lambda_2]$, we can assume that $f$ is positive by adding a constant.
By (\ref{3.4}), we have that
\begin{align} \label{3.18}
\text{LHS of (\ref{3.3})}\leq \frac{C_0}{\chi_1} +C_2 \sum_{i=1}^n F^{i\bar{i}}  \leq C_3 \sum_{i=1}^{n} \frac{1}{\chi_i}.
\end{align}

For right hand side, we have
\begin{align} \label{3.19}
\text{RHS of (\ref{3.3})}\geq A(-f(c_k)+nf'\sigma_n(\chi^{-1}))+A\epsilon C_4 \sum_{i=1}^n \frac{1}{\chi_i}.
\end{align}

Combining (\ref{3.18}) and (\ref{3.19}), taking $A$ such that $A\epsilon C_4-C_3=1$, we find there exists a universal constant $C$, such that
\begin{align}
\sum_{i=1}^n \frac{1}{\chi_i} \leq C.
\end{align}
Consequently, we have lower bound on $\chi_i, \forall i$, thus upper bound for $\chi_1$ by (\ref{bound}).

Thus we have proved that there exists a universal constant $C$ such that

$$\chi_{1}\leq C.$$ This leads to

\begin{thm} \label{C2}Let notations are given as above, we have
\begin{align}\notag
|\partial \bar{\partial}\varphi|_{C^0}\leq Ce^{A\varphi-\inf_{M\times[0,t]}\varphi},
\end{align}
for two universal constants $A$ and $C$ for any time interval $[0,t]$.
\end{thm}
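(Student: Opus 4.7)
The preceding computations reduce Theorem~\ref{C2} to two remaining tasks: converting the pointwise bound $\chi_1(x_0, t_0) \leq C$ established at the maximum point of $H = \log \chi_{1\bar 1} - A\varphi$ into the stated exponential estimate on $|\partial\bar\partial\varphi|_{C^0}$, and verifying the deferred claim $B \leq 0$ used to pass from (\ref{3.2}) to (\ref{3.3}).

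The first task is immediate from the maximum principle. For any $(x, \tau) \in M \times [0, t]$ one has $H(x, \tau) \leq H(x_0, t_0)$, i.e.,
\begin{equation*}
\log \chi_{1\bar 1}(x, \tau) - A\varphi(x, \tau) \leq \log \chi_{1\bar 1}(x_0, t_0) - A\varphi(x_0, t_0) \leq \log C - A\inf_{M \times [0, t]}\varphi.
\end{equation*}
Exponentiating bounds the largest eigenvalue $\chi_1$ of $\chi_\varphi$ by $C e^{A\varphi - A\inf\varphi}$; since $\frac{\sqrt{-1}}{2}\partial\bar\partial\varphi = \chi_\varphi - \chi_0$ with $\chi_0$ a fixed form, the stated $C^0$ bound on $|\partial\bar\partial\varphi|$ follows.

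For the second task, at $x_0$ I would diagonalize $\chi$ in normal coordinates for $\omega$ and invoke the standard eigenvalue decomposition of the matrix Hessian of a symmetric function $F$ (cf.\ Theorem 1.4 of~\cite{S}), which at a diagonal argument expresses $F^{i\bar j, k\bar l}\chi_{i\bar j, 1}\chi_{k\bar l, \bar 1}$ as a sum of a quadratic form in $\{\chi_{i\bar i, 1}\}$ with coefficients $\rho_{ij}$ plus off-diagonal pieces weighted by the divided differences $(\rho_i - \rho_j)/(\chi_i - \chi_j)$. Using the Ricci identity $\chi_{1\bar 1, i} = \chi_{i\bar 1, 1}$ modulo curvature (whose error is of lower order and can be absorbed in the manner of (\ref{2.4})), the positive contribution $\sum_i \rho_i |\chi_{1\bar 1, i}|^2/\chi_1^2$ in $B$ splits naturally into its $i=1$ piece and its $i \neq 1$ pieces. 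The $i \neq 1$ pieces pair with the off-diagonal $j=1$ concavity contribution and are absorbed using the symmetry of $\rho$ together with ellipticity, while the $i=1$ piece combines with the diagonal $i=j=1$ term $\rho_{11}|\chi_{1\bar 1, 1}|^2/\chi_1$ and is controlled by the strong concavity bound $\rho_{11} + \rho_1/\chi_1 \leq 0$ from Proposition~2.1. The remaining diagonal $i=j\neq 1$ and off-diagonal $i,j\neq 1$ contributions are already nonpositive by ordinary concavity, yielding $B \leq 0$.

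The main obstacle is the third-order bookkeeping in the second task: the positive term produced by differentiating $H$ in the spatial directions must be entirely absorbed by the concavity contribution, which fails for merely concave $F$ and is the precise motivation for the strong concavity hypothesis. This is the K\"ahler counterpart of Andrews' pinching argument~\cite{A2} and constitutes the technical heart of the theorem.
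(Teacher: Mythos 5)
Your proposal is correct and follows essentially the same route as the paper: the global bound is obtained by comparing $H$ at an arbitrary point with its value at $(x_0,t_0)$, and the claim $B\leq 0$ is proved by splitting the Hessian of $F$ at a diagonal argument into the diagonal block (controlled by strong concavity together with the $i=1$ positive term), the $(i\bar 1,1\bar i)$ divided-difference block paired with the $i\neq 1$ positive terms, and the remaining off-diagonal block which is nonpositive by concavity --- exactly the paper's $X$, $Y$, $Z$ decomposition. Two small imprecisions: since $\chi_\varphi$ is closed, the identity $\chi_{1\bar 1,i}=\chi_{i\bar 1,1}$ holds exactly in normal coordinates (no curvature error to absorb), and the absorption of the $i\neq 1$ pieces requires the quantitative inequality $F^{i\bar 1,1\bar i}+F^{i\bar i}/\chi_1\leq 0$, which does not follow from symmetry and ellipticity of a general $\rho$ but is verified by direct computation for $\rho=f(\sigma_{n-k}/\sigma_n)$, as in the paper.
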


Finally, we prove the claim that
\[
B=\frac{1}{\chi_{1\bar{1}}}\sum_{1\leq i,j,k,l\leq
n}F^{i\bar{j},k\bar{l}}\chi_{i\bar{j},1}\chi_{k\bar{l},\bar{1}}+\sum_{i=1}^{n}F^{i\bar{i}}\frac{|\chi_{1\bar{1},i}|^2}{\chi_{1\bar{1}}^2}\leq 0.
\]
We will divide $B$ into three groups:

\[X=\frac{1}{\chi_{1\bar{1}}}\sum_{1\leq i,j\leq n }F^{i\bar{i},j\bar{j}}\chi_{i\bar{i},1}\chi_{j\bar{j},\bar{1}} +F^{1\bar{1}}\frac{|\chi_{1\bar{1},1}|^2}{\chi_{1\bar{1}}^2}.\] The fact that $X$ is non-positive follows from the strong concavity of $F$ in Proposition 2.4.

\[
Y=\frac{1}{\chi_{1\bar{1}}}\sum_{i=2}^n F^{i\bar{1}, 1\bar{i}} \chi_{i\bar{1},1}\chi_{1\bar{i},\bar{1}}+\sum_{i=2}^{n}F^{i\bar{i}}\frac{|\chi_{1\bar{1},i}|^2}{\chi_{1\bar{1}}^2}.
\]
One can show by direct computation that $F^{i\bar{1},1\bar{i}}+\frac{F^{i\bar{i}}}{\chi_1} \leq 0, \forall i$, thus $Y\leq 0$.

\[
Z=\frac{1}{\chi_{1\bar{1}}}\sum_{i\neq j, j>1, k\neq l, k>1}F^{i\bar{j},k\bar{l}}\chi_{i\bar{j},1}\chi_{k\bar{l},\bar{1}}.
\]
Again by direct computation, each term is non-positive. We have thus finished the proof of the claim.

\subsection{$C^0$ estimate and convergence of the flow}
Following the method in ~\cite{SW}, we introduce two functionals. The monotonic behavior of these functionals along the flow (\ref{flow}) yields the $C^0$ estimate and convergence of the flow. Define functionals in $\mathcal{P}_{\chi_0}$ by
\begin{align} \label{functional1}
\mathcal{F}_{k,\chi_{_0}}(\phi)=\mathcal{F}_k(\phi)=\int_0^1 \int_M  \dot{\phi_t}
\chi_{\phi_t}^{k}\wedge\omega^{n-k} dt,
\end{align}
where $\phi_t$ is an arbitrary smooth path
in $\mathcal{P}_{\chi_{_0}}$ connecting $0$ and $\phi$, and $\dot{\phi_t}$ denotes time derivative. One can readily check that this definition is independent of the choice of the path $\varphi_t$.
Moreover, define
\begin{align} \label{functional2}
\mathcal{F}_{k,n}(\phi)={n \choose k}\mathcal{F}_k(\phi)-c_{n-k}\mathcal{F}_n(\phi).
\end{align}

The first variation of $\mathcal{F}_{n-k,n}$ is
\begin{align} \notag
\frac{d}{ d t}\mathcal{F}_{n-k,n}(\phi)=\int_M \dot{\phi_t}({n \choose k}\chi_{\phi_t}^{n-k}\wedge \omega^k-c_k \chi_{\phi_t}^n).
\end{align}
It follows the Euler-Lagrange equation of $\mathcal{F}_{n-k,n}$ is precisely the critical equation (\ref{equation}):
\begin{align} \notag
c_k \chi_{\phi}^n={n \choose k}\chi_{\phi}^{n-k}\wedge \omega^k.
\end{align}

We quote the following uniqueness result, the Theorem 4.1 of ~\cite{FLM}.
\begin{prop}[Uniqueness]
The solution to the critical equation (\ref{equation}) is unique up to a constant.
\end{prop}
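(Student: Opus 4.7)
The plan is to reduce the uniqueness statement to an application of the strong maximum principle on an auxiliary linear elliptic equation satisfied by the difference of two potentials. Suppose that $\tilde{\chi}_1 = \chi + \frac{\sqrt{-1}}{2}\partial\bar\partial\varphi_1$ and $\tilde{\chi}_2 = \chi + \frac{\sqrt{-1}}{2}\partial\bar\partial\varphi_2$ are two smooth K\"ahler solutions of (\ref{equation}). Dividing (\ref{equation}) by $\tilde\chi_i^n$, both solutions satisfy the pointwise identity $\sigma_k(\tilde\chi_i^{-1}) = c_k$, or equivalently $G(\tilde\chi_i) = c_k$ where $G(A) := \sigma_k(A^{-1})$ is a smooth symmetric function of the eigenvalues of the Hermitian matrix $A$ representing a K\"ahler metric. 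Set $\psi := \varphi_1 - \varphi_2$.

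The next step is to linearize along the straight path between the two solutions. Define $\tilde\chi_s := s\tilde\chi_1 + (1-s)\tilde\chi_2$ for $s \in [0,1]$. Convexity of the positive cone guarantees $\tilde\chi_s$ is K\"ahler for every $s$, so $G$ is defined and smooth along the path. By the fundamental theorem of calculus,
\begin{equation*}
0 = G(\tilde\chi_1) - G(\tilde\chi_2) = \int_0^1 \frac{d}{ds} G(\tilde\chi_s)\,ds = A^{i\bar j}\,\psi_{i\bar j},
\end{equation*}
where
\begin{equation*}
A^{i\bar j} := \int_0^1 G^{i\bar j}(\tilde\chi_s)\,ds.
\end{equation*}
By Proposition 2.4 (applied with the specific choice $f(x) = -x$, or directly from $G = \sigma_k \circ (\,\cdot\,)^{-1}$), the matrix $G^{i\bar j}$ is positive definite on every K\"ahler $\tilde\chi_s$. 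Hence $A^{i\bar j}$ is itself positive definite on $M$, and $\psi$ satisfies a homogeneous linear elliptic equation with smooth coefficients and no zeroth-order term.

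The strong maximum principle then forces $\psi$ to be constant on the connected compact manifold $M$, which is exactly the desired uniqueness statement. The main obstacle in this outline is purely a matter of bookkeeping: one must verify that $\tilde\chi_s$ remains in the domain of ellipticity for all $s \in [0,1]$, but this follows immediately from convexity of the K\"ahler cone and from the fact that $G^{i\bar j}$ is positive on \emph{every} positive definite Hermitian matrix, not merely on solutions of the critical equation. As an alternative (and perhaps cleaner) route, one could instead argue variationally: show that the functional $\mathcal{F}_{n-k,n}$ defined in (\ref{functional2}) is strictly convex along smooth curves in $\mathcal{P}_{\chi_0}$ (again using ellipticity of $G$), and conclude that its critical points are unique modulo constants. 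Either approach is self-contained and avoids any reference to the flow (\ref{flow}).
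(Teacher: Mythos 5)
Your main argument is correct, and it is the standard one: the paper itself does not prove this proposition but quotes it from Theorem 4.1 of \cite{FLM}, where the proof is exactly this linearization-along-the-segment-plus-strong-maximum-principle argument (the same device used by Song--Weinkove for the $J$-flow). Two small caveats. First, a sign slip: for $G(A)=\sigma_k(A^{-1})$ one has $\partial G/\partial\chi_i=-\sigma_{k-1}(\chi^{-1}|i)\chi_i^{-2}<0$, so $G^{i\bar j}$ is \emph{negative} definite on the K\"ahler cone (this is precisely why the paper takes $f'<0$, so that $F^{i\bar j}=f'\cdot G^{i\bar j}>0$); your $A^{i\bar j}$ is therefore negative definite, which of course still yields a homogeneous elliptic equation for $\psi$ after multiplying by $-1$, so the conclusion stands. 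Second, the ``cleaner'' variational alternative you sketch is not as immediate as you suggest: the second variation of $\mathcal{F}_{n-k,n}$ along a path $\phi_t$ integrates by parts to $-\int_M \frac{\sqrt{-1}}{2}\partial\dot\phi\wedge\bar\partial\dot\phi\wedge\bigl({n\choose k}(n-k)\chi_{\phi_t}^{n-k-1}\wedge\omega^k-nc_k\chi_{\phi_t}^{n-1}\bigr)$, whose sign is governed by the cone condition (\ref{cone}) holding \emph{at} $\chi_{\phi_t}$, not by mere ellipticity of $G$; since the cone condition is not automatic along an arbitrary path of K\"ahler metrics, strict convexity of the functional is not free. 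The maximum-principle route is the right one, and it has the advantage of requiring no cone condition at all.
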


\begin{prop} [Monotonicity of $\mathcal{F}_{n-k,n}$]
The functional $\mathcal{F}_{n-k,n}$ is decreasing along the flow (\ref{flow}).
\end{prop}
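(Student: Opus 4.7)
The plan is to use the first variation formula for $\mathcal{F}_{n-k,n}$ already recorded in the excerpt, substitute the flow equation for $\dot\varphi$, and then exploit the monotonicity assumption $f'<0$ from (\ref{condition1}) to conclude that the integrand is pointwise non-positive.

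More concretely, I would first rewrite the volume-form difference in the first variation. Using the identity
\begin{equation*}
{n \choose k}\chi_\varphi^{n-k}\wedge \omega^k = \frac{\sigma_{n-k}(\chi_\varphi)}{\sigma_n(\chi_\varphi)}\,\chi_\varphi^n,
\end{equation*}
which follows from the definitions of $\sigma_{n-k}$ and $\sigma_n$ given in the introduction, the first variation becomes
\begin{equation*}
\frac{d}{dt}\mathcal{F}_{n-k,n}(\varphi) = \int_M \dot\varphi\left(\frac{\sigma_{n-k}(\chi_\varphi)}{\sigma_n(\chi_\varphi)}-c_k\right)\chi_\varphi^n.
\end{equation*}

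Next I would substitute the flow equation. By (\ref{flow}) and the definition of $F$,
\begin{equation*}
\dot\varphi = F(\chi_\varphi)-C = f\!\left(\frac{\sigma_{n-k}(\chi_\varphi)}{\sigma_n(\chi_\varphi)}\right)-f(c_k).
\end{equation*}
Writing $y:=\sigma_{n-k}(\chi_\varphi)/\sigma_n(\chi_\varphi)$, the integrand becomes $(f(y)-f(c_k))(y-c_k)\,\chi_\varphi^n$. Because $f'<0$ on $\mathbb{R}_{>0}$, the function $f$ is strictly decreasing, so $(f(y)-f(c_k))(y-c_k)\leq 0$ pointwise, and since $\chi_\varphi^n>0$ the whole integrand is non-positive. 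Therefore $\frac{d}{dt}\mathcal{F}_{n-k,n}(\varphi)\leq 0$, as desired.

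There is no serious obstacle here; the only non-routine step is recognizing the algebraic identity that turns ${n\choose k}\chi_\varphi^{n-k}\wedge\omega^k-c_k\chi_\varphi^n$ into the scalar factor $(\sigma_k(\chi_\varphi^{-1})-c_k)\chi_\varphi^n$, which matches exactly the argument of $f$ appearing in $\dot\varphi$ and lets the sign condition $f'<0$ do the work. One could also remark that equality $\frac{d}{dt}\mathcal{F}_{n-k,n}=0$ holds at a given time iff $y\equiv c_k$, i.e.\ iff $\chi_\varphi$ solves (\ref{equation}), which will be useful later in establishing convergence to the critical metric.
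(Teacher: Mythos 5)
Your argument is correct and is essentially identical to the paper's proof: both substitute the flow equation into the first variation formula, recognize the integrand as $(f(a)-f(b))(a-b)\,\chi_\varphi^n$ with $a=\sigma_k(\chi_\varphi^{-1})$ and $b=c_k$, and conclude non-positivity from $f'<0$. Your explicit remark that equality holds exactly at solutions of (\ref{equation}) is a slight refinement of the paper's claimed strict inequality, but the substance is the same.
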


\begin{proof}
By direct computation, we have
\begin{eqnarray}
\frac{d}{d t}\mathcal{F}_{n-k,n}(\varphi_t)&=& \int_M \dot{\varphi_t}({n \choose k}\chi_{\varphi}^{n-k}\wedge \omega^k-c_k\chi_{\varphi}^n)\\
\notag &=& \int_M (f(\sigma_k(\chi_{\varphi}^{-1}))-f(c_k))(\sigma_k(\chi_{\varphi}^{-1})-c_k) \chi_{\varphi}^n < 0.
\end{eqnarray}
The integrand is of the form $(f(a)-f(b))(a-b)$ which is negative since $f'<0$.
\end{proof}

\begin{prop} [Monotonicity of $\mathcal{F}_{n-k}$]
The functional $\mathcal{F}_{n-k}$ is non-increasing along the flow (\ref{flow}).
\end{prop}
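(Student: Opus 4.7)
The plan is to reduce the non-increasing property of $\mathcal{F}_{n-k}$ along the flow to a Jensen-type inequality, using the concavity of $x\mapsto xf(x)$ that the structure conditions on $f$ guarantee.

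First, I would compute the first variation of $\mathcal{F}_{n-k}$ along an arbitrary smooth path $\varphi_t$; by the same path-independence computation used for $\mathcal{F}_{n-k,n}$ in the previous proposition, this gives
\begin{align*}
\frac{d}{dt}\mathcal{F}_{n-k}(\varphi_t) = \int_M \dot\varphi_t\, \chi_{\varphi_t}^{n-k}\wedge \omega^k.
\end{align*}
Plugging in the flow equation $\dot\varphi = f(\sigma_k(\chi_\varphi^{-1})) - f(c_k)$ and the pointwise identity
\begin{align*}
\chi_\varphi^{n-k}\wedge \omega^k = \binom{n}{k}^{-1}\sigma_{n-k}(\chi_\varphi)\,\omega^n = \binom{n}{k}^{-1}\sigma_k(\chi_\varphi^{-1})\,\chi_\varphi^n,
\end{align*}
this rewrites, setting $x:=\sigma_k(\chi_\varphi^{-1})$ and $d\mu:=\chi_\varphi^n$, as
\begin{align*}
\frac{d}{dt}\mathcal{F}_{n-k}(\varphi_t) = \binom{n}{k}^{-1}\int_M \bigl(f(x) - f(c_k)\bigr)\,x\,d\mu.
\end{align*}

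Next I would record the topological identity $\int_M x\,d\mu = c_k\int_M d\mu$, which is nothing but the defining relation $c_k \int_M \chi_\varphi^n = \int_M \sigma_k(\chi_\varphi^{-1})\chi_\varphi^n$ for the cohomological constant $c_k$. Thus $c_k$ is exactly the $\mu$-average of $x$ on $M$. The analytic input is then the concavity of $g(x):=xf(x)$: a direct computation gives $g''(x)=2f'(x)+xf''(x)$, and the structure condition $f''+f'/x\leq 0$ forces $xf''\leq -f'$, so $g''\leq f'<0$, making $g$ strictly concave on $\mathbb{R}_{>0}$.

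Applying Jensen's inequality to the concave $g$ against the probability measure $d\mu/\int_M d\mu$ yields
\begin{align*}
\int_M xf(x)\,d\mu \leq g(c_k)\int_M d\mu = c_k f(c_k)\int_M d\mu = f(c_k)\int_M x\,d\mu,
\end{align*}
which is exactly $\int_M (f(x)-f(c_k))\,x\,d\mu \leq 0$, giving the claimed monotonicity. The one place that deserves care is the bookkeeping of signs in the Jensen step: $d\mu$ is genuinely positive because $\chi_\varphi>0$ is preserved by the flow (from Section 3.1), and Jensen's inequality for a concave integrand bounds the average from above by the value at the mean, which is precisely the sign that combines with $f'<0$ to produce the desired inequality. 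The rest is formal rewriting, so this should be the shortest of the three monotonicity propositions.
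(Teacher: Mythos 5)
Your proof is correct, and it is at bottom the same argument as the paper's: a Jensen inequality against a measure whose $x$-average is pinned down cohomologically by $c_k$. The only difference is the bookkeeping. The paper takes the reference measure to be $\chi_\varphi^{n-k}\wedge\omega^k$ itself, applies Jensen to the concave function $y\mapsto f(1/y)$ evaluated at $y=\sigma_n(\chi)/\sigma_{n-k}(\chi)$, and uses that the average of $y$ is $1/c_k$; you instead take $d\mu=\chi_\varphi^n$, absorb the extra factor of $x=\sigma_k(\chi_\varphi^{-1})$ into the integrand, and apply Jensen to $x\mapsto xf(x)$ with mean $c_k$. These two applications are literally equivalent under the change of measure $\chi^{n-k}\wedge\omega^k=\binom{n}{k}^{-1}x\,\chi^n$, and reassuringly the concavity required is the same in both cases, namely $f''+2f'/x\leq 0$ (your computation $g''=2f'+xf''$ versus $\frac{d^2}{dy^2}f(1/y)=y^{-3}\left(y^{-1}f''+2f'\right)$), which follows from $f'<0$ together with $f''+f'/x\leq 0$ exactly as you argue. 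Your pointwise identity $\chi_\varphi^{n-k}\wedge\omega^k=\binom{n}{k}^{-1}\sigma_k(\chi_\varphi^{-1})\chi_\varphi^n$ and the normalization $\int_M x\,d\mu=c_k\int_M d\mu$ are both correct, so the argument is complete.
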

\begin{proof}
First define $g(x)=f(\frac{1}{x})$. It follows that $g$ is concave iff $f''+\frac{f'}{x}\leq 0$. Then by Jensen's inequality, we have
\begin{align}
\frac{1}{\int_M \chi^{n-k}\wedge \omega^k}\int_M f(\sigma_k(\chi^{-1})) \chi^{n-k}\wedge \omega^k&= \frac{1}{\int_M \chi^{n-k}\wedge \omega^k}\int_M g(\frac{\sigma_n(\chi)}{\sigma_{n-k}(\chi)}) \chi^{n-k}\wedge \omega^k \\ \notag
& \leq
 g(\frac{1}{\int_M \chi^{n-k}\wedge \omega^k} \int_M \frac{\sigma_n(\chi)}{\sigma_{n-k}(\chi)}\chi^{n-k}\wedge \omega^k)\\
 \notag &=g (\frac{1}{c_k})=f(c_k).
\end{align}
Hence
\begin{align}
\frac{\partial }{\partial t}\mathcal{F}_{n-k} = \int_M (f(\sigma_k(\chi_{\varphi}^{-1}))-f(c_k))\chi_{\varphi}^{n-k}\wedge \omega^k\leq 0.
\end{align}
\end{proof}

Finally, we single out the essential steps for the rest of the proof. By Theorem 4.5 of~\cite{FLM}, we have uniform bounds for the oscillation of $\varphi_t$, i.e., $$||\sup \varphi_t-\inf \varphi_t||\leq C.$$ Then using functional $\mathcal{F}_{n-k}$, we obtain a suitable normalization $\hat{\varphi_t}$ of $\varphi_t$, for which we can get uniform $C^0$ estimates, thus uniform $C^2$ estimates by Theorem~\ref{C2}. Higher order estimates follow from the Evans-Krylov and Schauder estimates. The corresponding metric thus converges to the critical metric solving inverse $\sigma_k$ problem (\ref{equation}).


\begin{thebibliography}{99}
\bibitem [A1] {A1} B. Andrews \emph{Contraction of convex hypersurfaces in Euclidean space}, Calc. Var. Partial Differential Equations. $\mathbf{2}$ (1994), no. 2, 151-171.
\bibitem [A2] {A2} B. Andrews \emph{Pinching estimates and motion of hypersurfaces by curvature functions}, J. Reine Angew. Math. $\mathbf{608}$ (2007), 17-33.
\bibitem [B] {B} S. Brendle \emph{Convergence of the Yamabe flow for arbitray initial energy}, J. Differential Geom. $\mathbf{69}$ (2005), no.2, 217-278.
\bibitem [CK] {CK} H. Cao and J. Keller \emph{About Calabi problem: a finite dimensional approach}, arXiv:1102.1097.
\bibitem [CGY] {CGY} A. Chang, M. Gursky and P. Yang \emph{An equation of Monge-Ampere type in conformal geometry, and four-manifolds of Ricci curvature}, Ann. of Math. (2) $\mathbf{155}$ (2002), no. 3, 709-787.
\bibitem [Ch1] {Ch1} X. Chen \emph{On the lower bound of the Mabuchi energy and its application},
Int. Math. Res. Not. $\mathbf{12}$ (2000), 607-623.
\bibitem [Ch2] {Ch2} X. Chen \emph{A New Parabolic Flow in K\"{a}hler Manifolds}, Comm. Ana. Geometry,,
$\mathbf{12}$, no.4, (2004), 837-852.
\bibitem [D] {D} S. Donaldson \emph{Moment maps and diffeomorphisms}, Asian J. Math. $\mathbf{3}$, no.
1 (1999), 1-16.
\bibitem [FLM] {FLM} H. Fang, M. Lai and X. Ma, \emph{On a class of fully nonlinear flows in K\"{a}hler geometry}, J. Reine. Angew. Math. $\mathbf{653}$ (2011), 189-220.
\bibitem [GW] {GW} P. Guan and G. Wang, \emph{Local estimates for a class of fully nonlinear equations arising from conformal geometry}, Int. Math. Res. Not. (2003) no. 26, 1413-1432.
\bibitem [H] {H} Z. Hou, \emph{Complex Hessian equation on K\"{a}hler manifold}, Int. Math. Res. Not. IMRN, (2009), no. 16, 3098-3111.
\bibitem [HMW] {HMW} Z. Hou, X. Ma and D. Wu, \emph{A second order estimate for complex Hessian equations on a compact K\"{a}hler manifold}. Math. Res. Lett. $\mathbf{17}$ (2010), no.3, 547-561.
\bibitem [SW] {SW} J. Song and B. Weinkove, \emph{The convergence and singularities of the J-flow with applications to the Mabuchi energy}, Comm. Pure Appl. Math. $\mathbf{61}$ (2008), no. 2, 210-229.
\bibitem [S] {S} J. Spruck, \emph{Geometric aspects of theory of fully nonlinear elliptic equations}, http://www.math.jhu.edu/~js/msri.notes.pdf.
\bibitem [V] {V} J. Viaclovsky, \emph{Conformal geometry, contact geometry, and the calculus of variations}, Duke Math. J. $\mathbf{101}$ (2000), no. 2, 283-316.
\end{thebibliography}
\end{document}